\def\CX{{\mathbb C}}
\def\PX{{\mathbb P}}
\def\GL{{\rm GL}}
\def\gl{{\rm gl}}
\def\Scal{{\rm Scal}}
\def\diag{{\rm Diag}}
\def\d{{
\partial}}
\def\calD{{\mathcal D}}
\def\calO{{\mathcal O}}
\def\calP{{\mathcal P}}
\def\d{{
\partial}}
\def\tbar{{t}}
\def\Hbar{{\overline{H}}}
\def\Gbar{{\overline{G}}}
\def\pp{{\mathbb{P}}^1({\mathbb{C}})}
\newtheorem{theorem}{Theorem}[section]
\theoremstyle{definition}
\newtheorem{thm}{Theorem}
\newtheorem{lem}[thm]{Lemma}
\newtheorem{cor}[thm]{Corollary}
\newtheorem{prop}[thm]{Proposition}
\newtheorem{defin}[thm]{Definition}
\newtheorem{remark}[thm]{Remark}
\numberwithin{equation}{section}
\begin{document}

\title{Projective Isomonodromy and Galois Groups}


\author{Claude Mitschi}
\address{Institut de Recherche Math\'ematique Avanc\'ee, Universit\'e de Strasbourg et CNRS, 7 rue Ren\'e Descartes, 67084 Strasbourg Cedex, France}
\email{mitschi@math.unistra.fr}
\thanks{}

\author{Michael F. Singer}
\address{Department of Mathematics, North Carolina
State University, Box 8205, Raleigh, North Carolina 27695-8205}
\curraddr{}
\email{singer@math.ncsu.edu}
\thanks{The second author was partially supported by NSF Grants CCF-0634123 and CCF-1017217. He would also like to thank the   Institut de Recherche Math\'ematique Avanc\'ee, Universit\'e de Strasbourg et C.N.R.S., for its hospitality and support during the preparation of this paper.}

\subjclass[2010]{Primary 34M56, 12H05, 34M55 }

\date{}

\dedicatory{}

\commby{}

\begin{abstract}
In this article we introduce the notion of projective isomonodromy, which is a special type of monodromy evolving deformation of  linear differential equations, based on the example  of the Darboux-Halphen equation. We give an algebraic condition  for a  paramaterized linear differential equation to  be projectively isomonodromic, in terms of the derived group of its parameterized Picard-Vessiot group.
\end{abstract}

\maketitle
\section {Introduction} 
Classically, monodromy preserving deformations of Fuchsian systems have been investigated   by many authors who described them in terms of the Schlesinger equation and its links to Painlev\'e equations. In \cite{Landesman}, Landesmann developed a new Galois theory for  parameterized differential equations. A special case was developed   in \cite{CaSi} where the authors consider parameterized {\it linear}  differential equations and discuss various properties of the  parameterized Picard-Vessiot group, the PPV-groups for short. This is a linear differential algebraic group in the sense of Cassidy~\cite{cassidy1}. As is well known, the differential Galois group of a system with regular singularities is, as a linear algebraic group, Zariski topologically generated by the monodromy matrices  with respect to a fundamental solution. Cassidy and Singer have shown that a parameterized family of such systems is isomonodromic if and only if its PPV-group is conjugate to a (constant) linear algebraic group.

 Analogous to the Schlesinger and Painlev\'e equations' relation to isomonodromic deformations of Fuchsian systems,  the Darboux-Halphen~V equation accounts for a special type of monodromy evolving deformation of Fuchsian systems, as was shown by Chakravarty and Ablowitz in~\cite{ChAb}. After reviewing the notion of isomonodromy in Section~\ref{classIso}, we follow and detail, in Section~\ref{monoevo}, the  description by Ohyama \cite{ohyama} of the Darboux-Halphen  system  in a way that will illustrate the general notion of  projective isomonodromy that is introduced in Section~\ref{projmono} for general parameterized (not necessarily Fuchsian) 
 differential systems with analytic coefficients. For Fuchsian systems, our de\-fi\-ni\-tion naturally extends the classical notion of isomonodromy (not necessarily of the Schlesinger type)  given for instance in \cite{Bol_iso_def}.   For these systems, we  show in Section~\ref{isovproj}  that projective isomonodromy  is indeed the type of monodromy evolving deformation introduced by Ohyama in (\cite{ohyama},~Section 4) and we characterize it by  a  condition on the residue matrices. In Section~\ref{galois}, we consider general parameterized systems with regular singularities that are not necessarily Fuchsian,  and we characterize  pro\-jec\-tive isomonodromy by the purely algebraic condition that the derived group $(G,G)$ of the PPV-group $G$ be conjugate to a constant linear algebraic group when the given system is absolutely irreducible. 
 
We wish to thank Stephane Malek for making us aware of \cite{ohyama}.

\section{Classical isomonodromy} \label{classIso}

In many of  the classical studies of isomonodromic deformations,    only parameterized  {\it Fuchsian} systems  are  considered. Furthermore, these systems are assumed to be parameterized in a very special way, that is, the systems are written as 

\begin{equation}\label{isocl}
\frac{dY}{dx}=\sum_{i=1}^{m}\frac{A_i(a)}{x-a_i}, \ \ \ \sum_{i=1}^{m}{A_i(a)}=0
\end{equation}
where the  $n\times n$ matrices  $A_i(a)$ depend holomorphically on the multi-parameter $a=(a_1,\ldots,a_n)$  in  some open  polydisk~$D(a^0)$ and the condition on the residue matrices guarantees, for simplicity,  that $\infty$ is not singular. The polydisk $D(a^0)=
D_1\times\ldots\times D_m$ has center at the initial location $a^0=(a^0_1,\ldots,a^0_m)\in \CX^m$ of the poles, with  $D_i\subset \CX$ a disk with center  $a^0_i$ and $D_i\cap D_j\ne \emptyset$ for all $i\ne j$.  Let  $x_0\in\calD=\pp\setminus\bigcup_i D_i$.  

 For fixed $a\in D(a^0)$ and local fundamental solution $Y_a$  of (\ref{isocl}) at $x_0$,  analytic continuation along a loop $\gamma$  from $x_0$ in $\calD_a=\pp\setminus\{a_1,\ldots,a_m \}$ yields a solution $Y_a^{\gamma}$. The {\it monodromy representation} with respect to $Y_a$ is 
\begin{eqnarray}\label{chi}
\chi_a : \pi_1(\calD_a;x_0)\rightarrow \GL_n(\CX)
\end{eqnarray}
defined  by
\[Y_a^{\gamma}=Y_a.\chi_a(\gamma),  \]
for all $[\gamma]\in \pi_1(\calD_a;x_0)$.

\begin{defin} 
Equation \eqref{isocl} is  {\it isomonodromic}, or an  {\it isomonodromic deformation},  if for each $a\in D(a^0)$  there is a matrix  $C(a)\in \GL_n(\CX)$ such that
$$\chi_a=C(a)\ \chi_{a^0}\ C(a)^{-1}.$$
\end{defin}

\noindent Bolibrukh (\cite{Bol_iso_def}, \cite{Bol_cfl}) has characterized isomonodromic deformations as follows. 
\begin{thm}[Bolibrukh]\label{bol}  
Equation \eqref{isocl} is  isomonodromic if and only if the following equivalent conditions hold.
\begin{enumerate}
\item There is a differential $1$-form $\omega$ on 
$\left(\pp\times D(a^0)\right)\setminus \bigcup_{i=1}^m
\{(x,a)\ | \ x-a_i=0 \}$ such that
\begin{itemize}
\item for each fixed $a\in D(a^0),$
$$\omega=\sum_{i=1}^{m}\frac{A_i(a)}{x-a_i} dx$$
\item $d\omega=\omega\wedge\omega.$
\end{itemize}
\item For each  $a\in D(a^0)$ there is a fundamental solution~$Y_a$ of (\ref{isocl}) such that~$Y_a(x)$ is analytic in $x$ and $a$,  and  the corresponding monodromy representation $\chi_a$ does not depend on $a$, that is, $\chi_a=\chi_{a^0}$.
\end{enumerate} 
\end{thm}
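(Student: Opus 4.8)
The plan is to establish a single chain of equivalences among the two listed conditions and isomonodromy, organized as $(2)\Rightarrow(1)\Rightarrow(2)$ together with $(2)\Leftrightarrow$ isomonodromy. The easiest link is $(2)\Rightarrow(1)$. Given a fundamental solution $Y=Y(x,a)$ that is jointly analytic in $x$ and $a$ and whose monodromy satisfies $\chi_a=\chi_{a^0}$ for all $a$, I would form the total logarithmic derivative $\omega=(dY)\,Y^{-1}$, where $d$ is the exterior derivative in both $x$ and $a$. Constancy of the monodromy makes $\omega$ single-valued on the total space $M=(\pp\times D(a^0))\setminus\bigcup_i\{x-a_i=0\}$; its $dx$-component equals $\sum_i\frac{A_i(a)}{x-a_i}\,dx$ because $Y$ solves \eqref{isocl} fiberwise; and applying $d$ to the identity $dY=\omega Y$ yields $(d\omega-\omega\wedge\omega)\,Y=0$, hence $d\omega=\omega\wedge\omega$ since $Y$ is invertible.

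For $(1)\Rightarrow(2)$ I would read $d\omega=\omega\wedge\omega$ as the Frobenius complete-integrability condition for the Pfaffian system $dY=\omega Y$. On the universal cover of $M$ this produces a fundamental matrix $Y(x,a)$ holomorphic in both variables whose fiberwise restriction solves \eqref{isocl}, and whose monodromy is a single representation of $\pi_1(M,(x_0,a^0))$. The key topological point is that the projection $M\to D(a^0)$ is a locally trivial fibration with fiber $\calD_a$: since the poles $a_i$ are confined to the pairwise disjoint disks $D_i$ they never collide, so the fiber type is constant and $x_0\in\calD$ lies in every fiber, giving a canonical base point. As $D(a^0)$ is contractible, the homotopy exact sequence of this fibration shows the fiber inclusion induces an isomorphism $\pi_1(\calD_a)\cong\pi_1(M)$, and under the identification of fibers supplied by the trivialization the restricted representations $\chi_a$ all coincide with $\chi_{a^0}$, which is exactly $(2)$.

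It remains to tie these conditions to isomonodromy. That $(2)$ implies isomonodromy is immediate, taking $C(a)$ to be the identity. For the converse, isomonodromy $\Rightarrow(2)$, I would normalize $Y_a$ by $Y_a(x_0)=I$, which makes it jointly analytic and makes each monodromy matrix $M_i(a)=\chi_a(\gamma_i)$ holomorphic in $a$; the hypothesis then reads $M_i(a)=C(a)M_i(a^0)C(a)^{-1}$. Replacing $Y_a$ by $\tilde Y_a=Y_a\,C(a)$ produces the constant monodromy $\tilde\chi_a=\chi_{a^0}$ demanded by $(2)$, provided $C(a)$ can be chosen holomorphic in $a$ with $C(a^0)=I$.

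The main obstacle is precisely this holomorphic choice of $C(a)$, because $C(a)$ is determined only up to the centralizer of the image of $\chi_{a^0}$, which may be positive-dimensional when the monodromy is reducible. I would resolve it by viewing $a\mapsto(M_1(a),\dots,M_m(a))$ as a holomorphic map into $\GL_n(\CX)^m$ whose image lies in the single simultaneous-conjugacy orbit of $(M_i(a^0))$; this orbit is a smooth locally closed subvariety, the orbit map $C\mapsto(CM_i(a^0)C^{-1})$ is a holomorphic submersion onto it, and therefore admits local holomorphic sections. Composing such a section with the monodromy family gives a holomorphic $C(a)$ near $a^0$, and the only ambiguity between local choices lies in the centralizer, so these local choices can be reconciled across the contractible polydisk $D(a^0)$. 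This patching, rather than any of the formal computations, is the delicate part of the argument.
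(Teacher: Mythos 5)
The paper itself gives no proof of this theorem: it is stated as a quoted result of Bolibrukh, with references to \cite{Bol_iso_def} and \cite{Bol_cfl}, so your argument can only be measured against the standard one. Three of your four implications are essentially right: $(2)\Rightarrow(1)$ via $\omega=dY\cdot Y^{-1}$ (single-valued because the fiber monodromy is literally constant and $D(a^0)$ is contractible, with $d\omega=\omega\wedge\omega$ automatic); $(1)\Rightarrow(2)$ via Frobenius integrability together with the locally trivial fibration $M\to D(a^0)$ over a contractible base; and $(2)\Rightarrow$ isomonodromy with $C(a)=I$.

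The genuine gap is in isomonodromy $\Rightarrow(2)$, precisely at the point you label delicate. You obtain local holomorphic sections of the orbit map $C\mapsto (CM_i(a^0)C^{-1})$ and then assert that because any two local choices differ by an element of the centralizer $Z$ of $(M_1(a^0),\dots,M_m(a^0))$, they ``can be reconciled across the contractible polydisk.'' That reconciliation is the entire content of the step, not a consequence of contractibility alone: the differences of local sections form a holomorphic \v{C}ech $1$-cocycle with values in $Z$, i.e.\ a holomorphic principal $Z$-bundle over $D(a^0)$, and you need it to be \emph{holomorphically} trivial. Topological triviality is free; holomorphic triviality requires Grauert's Oka principle (which does apply here, since $Z$ is the unit group of the commutant algebra of the $M_i(a^0)$ and hence a connected complex Lie group, and $D(a^0)$ is Stein and contractible), or some equivalent argument. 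As written, the ``so'' carries a real theorem that is never invoked. Note that the paper isolates exactly this statement as Proposition~\ref{prop2} and attributes its proof to Proposition 1 of \cite{Bol_iso_def}; to close your argument you should either cite that result or supply the Oka-principle step explicitly.
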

\noindent A special type of isomonodromic deformation is given by the  Schlesinger differential form
\begin{equation} \omega_s=\sum_{i=1}^m \frac{A_i(a)}{x-a_i}d(x-a_i) 
\end{equation}
whose integrability condition is known as the {\it Schlesinger equation}
\begin{equation} \label{schl} dA_i(a)=-\sum_{j=1, j\ne i}^m \frac{[A_i(a),A_j(a)]}{a_i-a_j}d(a_i-a_j), \ \ i=1,\ldots m.\end{equation}

\noindent Bolibrukh gave examples \cite{Bol_iso_def} of isomonodromic deformations that are not of the Schlesinger type and he described the general differential forms  that occur in  Theorem \ref{bol}.

 In the special case of  order two Fuchsian systems with four singularities one can, generically,  reduce each system to an  order two linear scalar differential equation satisfied by the first component of the dependent variable $Y$,  namely a Fuchsian scalar equation with  an additional  apparent singularity $\lambda$.  It is well known that the Schlesinger isomonodromy condition then translates into a non-linear equation of Painlev\'e VI  type satisfied by $\lambda$.  For  basic results about  Painlev\'e equations and isomonodromic deformations, we refer to \cite{japanese} and \cite{AbCla} .

\section{An example of a monodromy evolving deformation}\label{monoevo} In  \cite{ChAb},  Chakravarty and Ablowitz  describe  the Darboux-Halphen system
\begin{eqnarray}\label{DaH}
\left\{ \begin{array}{cccccccc}
\omega_1' &= &&\omega_2\omega_3 &-&\omega_1(\omega_2+\omega_3)&+& \phi^2 \\
\omega_2' &=&& \omega_3\omega_1 &-&\omega_2(\omega_3+\omega_1)&+ &\theta^2 \\
\omega_3' &=&& \omega_1\omega_2 &-&\omega_3(\omega_1+\omega_2)&- &\theta\phi \\
\phi'         &=&&\omega_1(\theta - \phi) &-&\omega_3(\theta + \phi) &&\\
\theta'      &=& - &\omega_2(\theta- \phi) &-&\omega_3(\theta + \phi) &&\\
\end{array}
\right. 
\end{eqnarray}
as a prototype of a class of non-linear systems arising as the integrability conditions of an associated Lax pair in the same way as the Painlev\'e and Schlesinger equations do.
This system occurs in the Bianchi IX cosmological models and  arises  from a special reduction of the self-dual Yang-Mills (SDYM) equation ({\it cf}.~\cite{AbCla}, \cite{ChAb}, \cite{ChAb2}, \cite{ohyama}). It is also related to the Chazy and Painlev\'e VI equations (see \cite{AbCla} for a complete study of such equations and  reductions of  the SDYM equation). We will  review and detail Ohyama's study \cite{ohyama} of  this equation  and refer to \eqref{DaH} as the Darboux-Halphen V Equation or DH-V for short. 

Originally ({\it cf}.~\cite{ChAb}) the DH-V system  with the special condition $\theta=\phi=0$, called the Halphen II equation (H II),  arose from a geometrical problem studied by Darboux. In 1878 Darboux  obtained this equation as the integrability condition for the existence in  Euclidean space of a one-parameter family of surfaces  orthogonal to two arbitrary given independent families of parallel surfaces (such a family is necessarily quadratic and ruled).  Halphen solved this system in 1881. 

 Ohyama (\cite{ohyama}, \cite{ohyama2}) shows how H II is in the generic case equivalent to  
\begin{equation}\label{Halphen} x_i'=Q(x_i), \ \  i=1,2,3
\end{equation}
where 
\begin{eqnarray}\label{quadr}
Q(x)=x^2+a(x_1-x_2)^2+b(x_2-x_3)^2+c(x_3-x_1)^2
\end{eqnarray}
with constants $a,b,c$  such that $a+b=c+b=-{1}/{4}$
(all derivatives are with respect to the complex variable $t$). 

\ As pointed out in \cite{ohyama},  these equations do not satisfy the Painlev\'e property (on their movable singularities) and may therefore not be expected  to be monodromy-preserving conditions. Nevertheless Chakravarty and Ablowitz in \cite{ChAb}, and Ohyama in  \cite{ohyama},  showed how these  non-linear equations actually express a special type of  monodromy evolving deformation, in the same way as the Schlesinger  and Painlev\'e VI equations rule the isomonodromic deformations of the Schlesinger type. 

 Using the connection relating the self-dual Yang-Mills equation and the conformally self-dual Bianchi equations, these authors showed that DH-V, and hence Equation \eqref{Halphen},  actually is the compatibility condition of   a Lax pair
\begin{eqnarray}\label{eqn1}
 \frac{\partial Y}{\partial x} & = & \left( {\frac{\mu}{P}}I+\sum_{i=1}^3{\frac{\lambda_iS}{x-x_i}} \right)Y
\end{eqnarray}
 \begin{eqnarray}\label{eqn2}
\frac{\partial Y}{\partial t} & = & \left( \nu I+\sum_{i=1}^3 \lambda_ix_iS\right)Y-Q(x) {\frac{\partial Y}{\partial x} }
\end{eqnarray}
of $2\times 2$ matrix equations, where $x_1,x_2,x_3$ depend on $t$ and $P(x)=(x-x_1)(x-x_2)(x-x_3)$,  and $S$ is a traceless constant matrix   and where $\mu$ and the $\lambda_i$ are constants  with $\mu\ne 0$,  $\lambda_1+\lambda_2+\lambda_3=0$,  and $\nu(x,t)$ satisfies the  auxiliary equation
\begin{eqnarray}\label{eqn7a}
\frac{\partial \nu}{\partial x}=-{\frac{x+x_1+x_2+x_3}{P}}\mu.
\end{eqnarray}

\noindent Assume that the Lax pair is integrable, {\it i.e.} that Equation \eqref{Halphen} is satisfied by the  $x_i$. Equation (\ref{eqn1}) Êis for fixed $t$ a Fuchsian system with three singular points $x_1,x_2,x_3$, and
Equation (\ref{eqn7a}) implies that $\nu$  is not a rational function of $x$.  Since  the coefficients of (\ref{eqn2}) are not rational,  the Lax pair ((\ref{eqn1}), (\ref{eqn2})) does not describe an isomonodromic deformation ({\it cf.} \cite{Sibuya}, Remark A.5.2.5).

 Let us  fix $t_0\in \CX$, and  open disjoint disks $D_i$ with center at $x_i(t_0)$, $i=1,2,3$.  Let   { $U(t_0)$} be a neighborhood of $t_0$ in $\CX$ such that $x_i(t)\in D_i$ for each $i$ and all~$t\in U(t_0)$, and let $x_0\in \CX$ be a fixed base-point,  $x_0\notin \bigcup_i D_i$. 

Let $Y(t,x)$, for $t\in { U(t_0)}$,  denote a  fundamental solution, in a neighborhood of $x_0$,  of the Lax pair (\eqref{eqn1}, \eqref{eqn2}).  It is therefore analytic in both $t$ and $x$. For fixed $t\in { U(t_0)}$, we can write an analytic continuation of the fundamental solution~$Y(t,x)$ to a punctured neighborhood of $x_i$ as  
 \begin{eqnarray}\label{eqn4}
{Y(t,x)=Y_i(t,x-x_i(t)).(x-x_i(t))^{L_i(t)}}
\end{eqnarray}
where $Y_i(t, x-x_i(t))$  is single-valued, and the matrix $L_i(t)$ does not depend on $x$.  Note that $Y_i(t,x-x_i(t))$ is analytic in $t$ and $x$ and $L_i(t)$ is analytic in $t$. Indeed, for  fixed  $t\in { U(t_0)}$,   analytic continuation  of $Y$ along an elementary loop around $x_i(t)$ yields a fundamental solution ${\tilde Y}(t,x)$ of \eqref{eqn1} which is again analytic in both $t$ and $x$,  by the theorem about analytic dependence on  initial conditions ({\it cf.} \cite{cartan}). The monodromy matrix  $M_i(t)$ is  therefore analytic in $t$, as well as $L_i=(1/2\pi i)\log M_i(t)$, and hence  $Y_i(t,x-x_i(t))=Y(t,x)\ (x-x_i(t))^{-L_i(t)}$ is analytic in $t$ and $x$ in $(U(t_0)\times D_i)\setminus \{(t,x)\ | \ x-x_i(t)=0 \}$.

 \begin{prop}\label{prop0}
With notation as above, let $M_i(t)$ for any fixed $t\in U(t_0)$ denote the monodromy matrix of (\ref{eqn1}) with respect to $Y$, defined by analytic continuation along  an elementary loop around $x_i(t)$. Then
\begin{eqnarray}
M_i(t)=c_i(t)G_i
\end{eqnarray}
where $G_i$ is a constant matrix and where~$c_i(t)=e^{-2\pi\mu\sqrt{-1}\int_{t_0}^t \alpha_i(t)dt}$ and the $\alpha_i$ are the residues of
\begin{eqnarray}\label{projfact}
 {\frac{x+x_1+x_2+x_3}{P}}=\sum_{i=1}^{3}\frac{\alpha_i}{x-x_i}.
\end{eqnarray}
\end{prop}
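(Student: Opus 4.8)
The plan is to exploit the asymmetry between the two equations of the Lax pair: the coefficient matrix $A(x,t)=\frac{\mu}{P}I+\sum_i\frac{\lambda_i S}{x-x_i}$ of \eqref{eqn1} is rational, hence single-valued in $x$, whereas the auxiliary function $\nu$ entering \eqref{eqn2} is multivalued in $x$ by \eqref{eqn7a}. The entire $t$-evolution of the monodromy will be traced back to this multivaluedness of $\nu$.

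First I would record the monodromy of $\nu$ along an elementary loop $\gamma_i$ about $x_i$. Integrating \eqref{eqn7a} together with the partial fraction decomposition \eqref{projfact} gives $\nu=-\mu\sum_j\alpha_j\log(x-x_j)+(\text{single-valued in }x)$, so analytic continuation along $\gamma_i$ sends $\nu\mapsto\nu-2\pi\sqrt{-1}\,\mu\,\alpha_i$. Rewriting the Lax pair as $\partial_x Y=AY$ and $\partial_t Y=\tilde B Y$ with $\tilde B=\nu I+\sum_i\lambda_i x_i S-Q(x)A$, I note that every term of $\tilde B$ except $\nu I$ is single-valued in $x$ (the $x_i$ and $S$ are $x$-independent, $Q$ and $A$ are rational), so continuation along $\gamma_i$ leaves $A$ unchanged and shifts $\tilde B$ by a scalar: $\tilde B^{\gamma_i}=\tilde B-2\pi\sqrt{-1}\,\mu\,\alpha_i\,I$.

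Next I would differentiate the defining relation $Y^{\gamma_i}=Y\,M_i(t)$ with respect to $t$. Because $Y(t,x)$ is analytic in both variables on $U(t_0)\times(D_i\setminus\{x=x_i\})$, as established just before the statement, analytic continuation in $x$ commutes with $\partial_t$, and the continued solution $Y^{\gamma_i}$ satisfies the same pair of equations but with $\tilde B$ replaced by $\tilde B^{\gamma_i}$. Substituting $\partial_t Y=\tilde B Y$ and $\partial_t Y^{\gamma_i}=\tilde B^{\gamma_i}Y^{\gamma_i}$ into the $t$-derivative of $Y^{\gamma_i}=Y\,M_i$, cancelling the common term $\tilde B\,Y\,M_i$, and multiplying by $Y^{-1}$ on the left, one is left with the purely scalar linear system
\[
\partial_t M_i=-2\pi\sqrt{-1}\,\mu\,\alpha_i(t)\,M_i .
\]
Solving this ODE with initial value $G_i:=M_i(t_0)$ yields $M_i(t)=c_i(t)\,G_i$ with $c_i(t)=e^{-2\pi\mu\sqrt{-1}\int_{t_0}^t\alpha_i(t)\,dt}$, as claimed.

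The main obstacle is the careful bookkeeping of the multivaluedness of $\nu$: one must verify that $Y^{\gamma_i}$ genuinely solves the $t$-equation with the \emph{continued} coefficient $\tilde B^{\gamma_i}$ rather than $\tilde B$, and that this continuation is compatible with $\partial_t$. Once that is set up, the computation is short, and the decisive structural point is that the surviving equation for $M_i$ is scalar, which forces the whole matrix $G_i$ to be constant in $t$; this is precisely the mechanism making the deformation projectively, rather than strictly, isomonodromic.
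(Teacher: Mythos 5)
Your proof is correct, and it takes a genuinely different route from the paper's. The paper works locally at each singular point: it writes $Y(t,x)=Y_i(t,x-x_i(t))\,(x-x_i(t))^{L_i(t)}$, computes $\partial Y/\partial t$ in two ways (once via the chain rule through $x_i(t)$, using $\partial Y_i/\partial x_i=-\partial Y_i/\partial x$, and once from the second Lax equation), and identifies the coefficients of $\log(x-x_i)$ as $x\to x_i$, using $\nu=\mu\log\prod_j(x-x_j)^{-\alpha_j}+\phi(t)$, to obtain $dL_i/dt=-\alpha_i\mu I$ and then $M_i=e^{2\pi i L_i}$. You instead stay at the base point $x_0$, observe that the only multivalued ingredient of the $t$-equation is the scalar $\nu I$, whose continuation along $\gamma_i$ shifts $\tilde B$ by $-2\pi\sqrt{-1}\,\mu\,\alpha_i I$, and differentiate the relation $Y^{\gamma_i}=Y M_i(t)$ in $t$ to land directly on the scalar ODE $\partial_t M_i=-2\pi\sqrt{-1}\,\mu\,\alpha_i M_i$. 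Your version avoids the local normal form at the singularity, the asymptotic matching of logarithmic terms (which the paper handles somewhat informally, ``comparing the leading terms''), and any discussion of the exponent matrices $L_i$; its one genuine obligation, which you correctly flag, is that analytic continuation along $\gamma_i$ commutes with $\partial_t$ and carries the $t$-equation to the one with coefficient $\tilde B^{\gamma_i}$ --- this follows from the joint analyticity of $Y$ in $(t,x)$ established just before the proposition, which also gives the analyticity of $M_i(t)=Y^{-1}Y^{\gamma_i}$ needed to differentiate. What the paper's computation buys in exchange is the finer local statement $dL_i/dt=-\alpha_i\mu I$ about the exponents themselves, not just the monodromy matrices; what yours buys is a shorter and more structural argument that makes transparent why the evolution is by scalars.
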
 

\begin{proof}
Let us show that
\begin{eqnarray}\label{exp}
 \frac{d L_i}{d t}=-\alpha_i \mu I
 \end{eqnarray}
 where $\alpha_i$ is the $x_i$-residue of
$(x+x_1+x_2+x_3)/P$, that is, 
 \[ \alpha_i=\frac{x_i+\sigma}{\prod_{j\ne i}(x_i-x_j)}\] 
 with~$\sigma=x_1+x_2+x_3$. Note that  $Y_i$  is  a function (the sum of a series) of the local	 coordinate $x-x_i$ . We  have
 \[ \frac{\d Y}{\d t}=- Q(x_i) \frac{\d Y_i}{\d x}\cdot (x-x_i)^{L_i} + Y_i \cdot  (x-x_i)^{L_i}\left(\log (x-x_i) \frac{d L_i}{d t}-Q(x_i)\frac{ L_i}{x-x_i}\right)\] 
 \[=-Q(x_i)  \frac{\d Y}{\d x}       + Y\cdot \log (x-x_i) \frac{d L_i}{d t}. \]
 
If we compare with  Equation \eqref{eqn2} of the Lax pair 
we get 
 \[ -Q(x_i)\frac{\d Y}{\d x}+ Y\cdot \log (x-x_i)\frac{d L_i}{d t}=-Q(x)\frac{\d Y}{\d x} +\left(\nu I +\sum_{i=1}^{3}c_ix_iS \right)Y.\]
From Equation  (\ref{eqn7a}) we have that
\[ \nu=\mu\log \prod_{i=1}^3(x-x_i)^{-\alpha_i}+\phi(t)\]
 for some function $\phi(t)$,  and hence as $x$ tends to $x_i$ for fixed $t$ (simplifying and then comparing the leading terms on each side) we get that 
 \[ \log (x-x_i)\frac{d L_i}{d t}\sim -\alpha_i \mu \log (x-x_i)I,\]
that is,
 \[ \frac{d L_i}{d t}=-\alpha_i \mu I .\]
The monodromy matrix  of (\ref{eqn1}) with respect to $x_0$ and $Y$ around  $x_i$ is $M_i=e^{2\pi i L_i}$, which in view of  (\ref{exp}) is of the form
\[ M_i(t)=c_i(t)G_i \]
where $G_i$ is the initial monodromy matrix around $x_i(t_0)$, and $$c_i(t)=e^{-2\pi\mu\sqrt{-1}\int_{t_0}^t \alpha_i(t)dt}.$$ 
  \end{proof}
 This is an example of what we will call projectively  isomonodromic deformations, and study from an algebraic point of view.   

\section{Projective isomonodromy }\label{projmono}
Let  $\calD$ be an open connected subset of  $\PX^1(\CX)$, $\calP$ be an open connected subset of  $\CX^r$, and $x_0 \in \calD$.  Assume that $\pi_1(\calD,x_0)$ is finitely generated by $\gamma_1, \ldots ,\gamma_m$.  Let $A(x,\tbar) \in \gl_n(\calO)$, where $\calO$ denotes the ring of  $n \times n$ matrices whose entries  are functions analytic on $\calD \times \calP$.  We will consider the behavior of  solutions of the differential equation
\begin{eqnarray}\label{eqn5}
\frac{dY}{dx} &= &A(x,\tbar)Y.
\end{eqnarray}
In the following we let $\Scal_n$ be the group of nonzero $n\times n$ scalar matrices.
\begin{defin} Equation (\ref{eqn5}) is {\it projectively isomonodromic} if there exist $m$ analytic functions $c_i:\calP \rightarrow \Scal_n(\CX)$  and fixed matrices $G_1, \ldots , G_m \in \GL_n(\CX)$ such that for each $\tbar \in \calP$ there is a local solution $Y_\tbar(x)$ of (\ref{eqn5}) at $x_0$ such that analytic continuation of $Y_\tbar(x)$ along $\gamma_i$ yields $Y_\tbar(x) \cdot G_i c_i(\tbar)$, for each $i$.
\end{defin}

 Let  $\bar{Y}(x, \tbar)$   be any solution of (\ref{eqn5}) analytic in $\calD_0\times\calP$, where $\calD_0$ is a neighborhood of $x_0$ in $\calD$ and let $M_i(\tbar)$ denote the monodromy matrix corresponding to analytic continuation of this solution around $\gamma_i$. Note that $M_i(\tbar)$ depends analytically on $\tbar$. If (\ref{eqn5}) is projectively isomonodromic then there exists a function  $C:\calP \rightarrow \GL_n(\CX)$  such that \[M_i(\tbar) = C(\tbar)^{-1} G_i c_i(\tbar)C(\tbar)\] for all $t\in \calP$.
Since there may be many ways of selecting $C(\tbar)$, this function need not depend analytically on $\tbar$.  However, we will show that one can find a function ${C}(\tbar)$  satisfying the above {\it and  analytic in $\tbar$}. This fact can be deduced  easily from the following result of Andrey Bolibruch whose proof is contained in the proof of Proposition 1 of \cite{Bol_iso_def}.
\begin{prop} \label{prop2} For each $ i=1, \ldots , m$, let $H_i: \calP \rightarrow \GL_n(\CX)$   be analytic on $\calP$ and let $G_i \in  \GL_n(\CX)$.  Assume that there is a function $C:\calP \rightarrow \GL_n(\CX)$ such that 
\[H_i(t)=C(t)^{-1}G_iC(t)\]
 for all $t \in \calP$ and  $i=1,\ldots,m$.  Then there exists an analytic function  $C$ with the same property.  
\end{prop}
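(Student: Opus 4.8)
The plan is to linearize the conjugacy condition and reduce the statement to the holomorphic triviality of a principal bundle whose structure group is the centralizer of the $G_i$. For fixed $\tbar$ the equation $H_i(\tbar)=C(\tbar)^{-1}G_iC(\tbar)$ is equivalent to the linear system $X\,H_i(\tbar)-G_i\,X=0$ in the entries of the unknown matrix $X$. Collecting these over $i$ defines a linear operator $L(\tbar)\colon\gl_n(\CX)\to\gl_n(\CX)^m$, $X\mapsto (X H_i(\tbar)-G_i X)_{i}$, whose matrix in the standard basis has entries analytic in $\tbar$; its kernel $V(\tbar)$ is exactly the space of intertwiners from the representation $\gamma_i\mapsto H_i(\tbar)$ to $\gamma_i\mapsto G_i$.

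First I would show that $\dim V(\tbar)$ is constant. The hypothesis gives an invertible $C(\tbar)\in V(\tbar)$, and for any $X\in V(\tbar)$ the matrix $C(\tbar)^{-1}X$ commutes with every $H_i(\tbar)$; hence $V(\tbar)=C(\tbar)\cdot\mathrm{Comm}(\{H_i(\tbar)\})$ and $\dim V(\tbar)=\dim\mathrm{Comm}(\{G_i\})=:d$, independent of $\tbar$ because $H_i(\tbar)$ is simultaneously conjugate to $G_i$. Consequently $L(\tbar)$ has constant rank $n^2-d$. Constant rank of an analytic matrix lets me, near any point $\tbar_0$, fix a nonvanishing $(n^2-d)$-minor and use Cramer's rule to produce an analytic frame $B_1(\tbar),\dots,B_d(\tbar)$ of $V(\tbar)$. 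Taking the combination $\sum_j\lambda_j B_j(\tbar)$ whose value at $\tbar_0$ equals the invertible $C(\tbar_0)$, and noting that its determinant stays nonzero near $\tbar_0$, I obtain an open cover $\{U_\alpha\}$ of $\calP$ together with analytic maps $C_\alpha\colon U_\alpha\to\GL_n(\CX)$ satisfying $C_\alpha(\tbar)H_i(\tbar)C_\alpha(\tbar)^{-1}=G_i$.

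On an overlap $U_\alpha\cap U_\beta$ both $C_\alpha$ and $C_\beta$ conjugate $H_i(\tbar)$ to $G_i$, so $g_{\alpha\beta}:=C_\alpha C_\beta^{-1}$ takes values in the centralizer $Z:=\{g\in\GL_n(\CX): gG_i=G_ig\ \forall i\}$, a closed (hence complex-Lie) subgroup of $\GL_n(\CX)$. The $g_{\alpha\beta}$ form a holomorphic $Z$-valued $1$-cocycle, i.e.\ a holomorphic principal $Z$-bundle on $\calP$. If this bundle is holomorphically trivial, say $g_{\alpha\beta}=h_\alpha^{-1}h_\beta$ with $h_\alpha\colon U_\alpha\to Z$ analytic, then the local pieces $h_\alpha C_\alpha$ agree on overlaps and glue to a global analytic $C\colon\calP\to\GL_n(\CX)$; since each $h_\alpha$ centralizes the $G_i$, this $C$ still satisfies $C(\tbar)H_i(\tbar)C(\tbar)^{-1}=G_i$, as required.

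The main obstacle is thus the triviality of this principal $Z$-bundle, which is the real content carried over from Bolibruch's Proposition 1. I would settle it with Grauert's Oka principle: over a Stein space the holomorphic and topological classifications of principal bundles with a complex Lie structure group coincide. Since it suffices to produce $C$ on a polydisk neighborhood, and the parameter domains arising in the application are polydisks, which are Stein and contractible, the associated topological bundle is trivial and hence so is the holomorphic one; more generally the same conclusion holds whenever $\calP$ is contractible. The only delicate points beyond this are verifying the constant-rank claim and checking that $Z$ is a bona fide complex Lie group so that Grauert's theorem applies; both are routine once the reduction is in place.
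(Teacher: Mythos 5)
The paper does not actually prove this proposition: it imports it from Bolibruch, remarking only that the proof ``is contained in the proof of Proposition 1 of \cite{Bol_iso_def}''. Your argument reconstructs essentially that proof. The local half --- rewriting the conjugacy as the linear system $XH_i(\tbar)-G_iX=0$, noting that its solution space $V(\tbar)$ has constant dimension because the hypothesis identifies it with $C(\tbar)\cdot\mathrm{Comm}(\{H_i(\tbar)\})$ and hence with a conjugate of $\mathrm{Comm}(\{G_i\})$, extracting a local analytic frame from the constant rank, and perturbing to match the invertible element $C(\tbar_0)$ --- is exactly Bolibruch's mechanism and is correct. Your globalization, via the holomorphic cocycle $g_{\alpha\beta}=C_\alpha C_\beta^{-1}$ with values in the centralizer $Z$ of $\{G_i\}$ and the Oka--Grauert principle, is also sound; $Z$ is the unit group of the commutant algebra of $\{G_i\}$, Zariski open in a linear subspace of $\gl_n(\CX)$, hence a connected complex algebraic (so complex Lie) group, and the constant-rank claim is verified as you indicate. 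This makes explicit a gluing step that Bolibruch can afford to leave implicit because his parameter space is a polydisk.

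The one substantive caveat is the one you already flag: the last step genuinely needs $\calP$ to be Stein with the $Z$-bundle topologically trivial (e.g.\ a polydisk, or any contractible Stein domain). The proposition as printed assumes only that $\calP$ is open and connected in $\CX^r$, and in that generality the statement is in fact false, so no proof along these (or any) lines can close the gap: take $\calP\subset\CX^3$ a Stein neighborhood of the affine quadric, homotopy equivalent to $S^2$, let $L$ be a holomorphic line bundle with $c_1(L)=1$, fix a holomorphic splitting $\calO^2\cong L\oplus L^{-1}$ (the direct sum is topologically, hence by Grauert holomorphically, trivial), and set $H_1(\tbar)=I+P(\tbar)$ with $P$ the projection onto the $L$-summand. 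Then $H_1(\tbar)$ is pointwise conjugate to the diagonal matrix with entries $2,1$, but a global analytic conjugator would trivialize $L$. Note that simple connectivity of $\calP$ (the hypothesis used later in the paper) does not exclude this example either; what is really needed, and what holds in every application in the paper, is a polydisk-like parameter domain. So your restriction is not a defect of the proof but a necessary sharpening of the hypotheses.
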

We can now prove the following
\begin{prop}\label{prop3} If (\ref{eqn5}) is projectively isomonodromic, then there exists a solution $Y(x, \tbar)$    of (\ref{eqn5}) analytic in $\calD_0\times\calP$, where $\calD_0$ is a neighborhood of $x_0$ in $\calD$ such that for all $\tbar \in \calP$ the monodromy matrix of $Y(x, \tbar)$ along $\gamma$ \underline{is} $ G_i  c_i(t)$.
\end{prop}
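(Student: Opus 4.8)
The plan is to compare the (a priori non-analytic) projectively isomonodromic solution with a fixed analytic solution, reduce the analyticity question to Proposition~\ref{prop2} by exploiting that the $c_i$ are scalar, and then twist the analytic solution by the resulting gauge matrix.

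First I would fix, as in the discussion preceding Proposition~\ref{prop2}, a solution $\bar{Y}(x,\tbar)$ of \eqref{eqn5} analytic on $\calD_0\times\calP$, and let $G_i(\tbar)$ denote its monodromy matrices, which depend analytically on $\tbar$. For each fixed $\tbar$ the projectively isomonodromic local solution $Y_{\tbar}$ and $\bar{Y}(\cdot,\tbar)$ are two fundamental solutions of the same linear system, hence differ by a constant-in-$x$ invertible matrix $C(\tbar)$; comparing the effect of analytic continuation along $\gamma_i$ on both descriptions yields the relation $G_i(\tbar)=C(\tbar)^{-1}G_i\,c_i(\tbar)\,C(\tbar)$ for every $i$. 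A priori $C(\tbar)$ is only a pointwise choice and need not be analytic, and this is the one subtle point of the argument.

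The key step is to remove the $\tbar$-dependence from the conjugated matrix so that Proposition~\ref{prop2} applies. Since each $c_i(\tbar)$ is a nonzero scalar matrix, it is central in $\GL_n(\CX)$, so I can rewrite the relation as $c_i(\tbar)^{-1}G_i(\tbar)=C(\tbar)^{-1}G_i\,C(\tbar)$. Setting $H_i(\tbar):=c_i(\tbar)^{-1}G_i(\tbar)$, each $H_i$ is analytic on $\calP$, being the product of the analytic $G_i(\tbar)$ with the analytic, nowhere-vanishing scalar $c_i(\tbar)^{-1}$, while the matrices $G_i$ on the right are now genuinely constant. Thus the hypotheses of Proposition~\ref{prop2} are met, and I obtain an \emph{analytic} function $C(\tbar):\calP\to\GL_n(\CX)$ with $H_i(\tbar)=C(\tbar)^{-1}G_iC(\tbar)$, equivalently $G_i(\tbar)=C(\tbar)^{-1}G_i\,c_i(\tbar)\,C(\tbar)$, for all $i$.

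Finally I would set $Y(x,\tbar):=\bar{Y}(x,\tbar)\,C(\tbar)^{-1}$. This is analytic on $\calD_0\times\calP$ because $\bar{Y}$ is analytic and $C(\tbar)^{-1}$ is analytic in $\tbar$, and it is again a fundamental solution of \eqref{eqn5} since $C(\tbar)^{-1}$ is constant in $x$. A direct computation of analytic continuation along $\gamma_i$ gives $Y^{\gamma_i}=\bar{Y}\,G_i(\tbar)\,C(\tbar)^{-1}=\bar{Y}\,C(\tbar)^{-1}\,G_i\,c_i(\tbar)=Y\cdot G_i\,c_i(\tbar)$, where the middle equality is the conjugation relation just established (after cancelling $C(\tbar)C(\tbar)^{-1}$). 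Hence the monodromy of $Y$ along $\gamma_i$ is precisely $G_i\,c_i(\tbar)$, as required. Everything except the analyticity of $C(\tbar)$ is formal, and that obstacle is removed by the scalar-factoring trick feeding into Bolibruch's Proposition~\ref{prop2}.
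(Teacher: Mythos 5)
Your proof is correct and follows essentially the same route as the paper: fix an analytic solution $\bar{Y}$, use the scalar (hence central) nature of the $c_i(\tbar)$ to reduce the relation $G_i(\tbar)=C(\tbar)^{-1}G_i\,c_i(\tbar)\,C(\tbar)$ to the hypotheses of Proposition~\ref{prop2} with $H_i(\tbar)=c_i(\tbar)^{-1}G_i(\tbar)$, and then twist $\bar{Y}$ by the resulting analytic gauge matrix. Your final monodromy computation is in fact slightly more careful than the paper's (which writes $\bar{Y}\cdot C(\tbar)$ where, with its stated conjugation convention, $\bar{Y}\cdot C(\tbar)^{-1}$ is the choice that literally yields monodromy $G_i\,c_i(\tbar)$).
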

\begin{proof} Let  $\bar{Y}(x, \tbar)$   be any solution of (\ref{eqn5}) analytic in $\calD_0\times\calP$, where $\calD_0$ is a neighborhood of $x_0$ in $\calD$ and let $M_i(\tbar)$ denote the monodromy matrix corresponding to analytic continuation of this solution around $\gamma_i$.  
Since (\ref{eqn5}) is projectively isomonodromic, there is a function $C:\calP \rightarrow \GL_n(\CX)$ such that $M_i(t)=C(t)^{-1}G_ic_i(t)C(t)$ for all $t\in \calP$. Applying Proposition~\ref{prop2} to $H_i(t)=M_i(t){ c_i(t)^{-1}}$ and $G_i$, we may assume that  $C(t)$ is analytic and thus  $Y(x,\tbar) = \bar{Y}(x,\tbar) {C}(\tbar)$ satisfies the conclusion of this Proposition.\end{proof}
\section{Isomonodromy versus projective isomonodromy}\label{isovproj}

\noindent  We now turn to the relation between Fuchsian isomonodromic equations and Fuchsian projectively isomonodromic equations. Consider the equation
 \begin{eqnarray}\label{eqn6}
 \frac{dY}{dx} & = & \sum_{i=1}^m \frac{A_i(\tbar)}{x-x_i(\tbar)}Y 
 \end{eqnarray}
together with
 \begin{enumerate}
 \item $\calP$, a  simply connected open subset of  $\CX^r$ and 
 \item $\calD$, an open subset  in $\pp$ and $x_0 \in \calD$
 such that     
 \item the functions $A_i: \calP \rightarrow \gl_n(\CX)$ and the $x_i: \calP \rightarrow \CX$ are analytic functions,
 \item $\pp\backslash\calD$ is the union of $m$ disjoint closed disks $D_i$ and 
 \item for $\tbar\in \calP$ we have $x_i(\tbar) \in D_i$.
 \end{enumerate}
 Let $x_0 \in \calD$ and $\gamma_i$,  $i = 1, \ldots , m,$ be the obvious  loops  generating $\pi_1(\calD, x_0)$. We then have that Equation (\ref{eqn6}) is analytic in $\calD \times \calP$ and we can speak of monodromy matrices $M_i(\tbar)$ corresponding to analytic continuation of a fundamental solution matrix along $\gamma_i$.  We can now state
 \begin{prop}\label{descr} Let $\calD$ and $\calP$ be as above.  Equation (\ref{eqn6}) is projectively  isomonodromic if and only if for each $ i=1, \ldots , m$, there exist functions $b_i:\calP \rightarrow \Scal_n(\CX)$ and $B_i:\calP \rightarrow \gl_n(\CX)$, analytic on $\calP$ such that 
 \begin{enumerate}
 \item $A_i= B_i+ b_i $ for $i = 1, \ldots , m$ and
 \item \begin{eqnarray}\label{eqn7}
 \frac{dY}{dx} & = & \left(\sum_{i=1}^m\frac{B_i(\tbar)}{x-x_i(\tbar)}\right)Y
 \end{eqnarray}
 is isomonodromic.
 \end{enumerate}
 \end{prop}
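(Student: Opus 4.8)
The plan is to connect the two equations by the explicit scalar gauge factor
\[ f(x,\tbar) = \prod_{i=1}^{m}\bigl(x-x_i(\tbar)\bigr)^{b_i(\tbar)}, \]
which solves the scalar equation $df/dx = \bigl(\sum_{i}b_i(\tbar)/(x-x_i(\tbar))\bigr)f$ and whose analytic continuation along the loop $\gamma_i$ multiplies it by $e^{2\pi\sqrt{-1}\,b_i(\tbar)}$, because $\gamma_i$ encircles only $x_i(\tbar)$. The point is that multiplying a solution of (\ref{eqn7}) by $f$ produces a solution of (\ref{eqn6}) precisely when $A_i = B_i + b_i I_n$, and that this multiplication converts $\tbar$-independent monodromy into monodromy that evolves by the scalars $c_i(\tbar)=e^{2\pi\sqrt{-1}\,b_i(\tbar)}I_n$. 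So the whole proof is the bookkeeping that turns this observation into the stated equivalence, using Bolibrukh's characterization (Theorem~\ref{bol}) and Proposition~\ref{prop3} to pass between ``isomonodromic'' and ``admits an analytic fundamental solution with constant monodromy''.

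For the implication ($\Leftarrow$), I would assume (1) and (2) and apply Theorem~\ref{bol} to the isomonodromic equation (\ref{eqn7}) to obtain a fundamental solution $Z(x,\tbar)$, analytic on $\calD_0\times\calP$, with $\tbar$-independent monodromy matrices $H_i$ along $\gamma_i$. Then I would check directly that $Y = fZ$ solves (\ref{eqn6}) and compute $Y^{\gamma_i} = e^{2\pi\sqrt{-1}\,b_i(\tbar)}\,Y H_i = Y\,\bigl(H_i\,c_i(\tbar)\bigr)$, so that (\ref{eqn6}) is projectively isomonodromic with $G_i=H_i$ and $c_i(\tbar)=e^{2\pi\sqrt{-1}\,b_i(\tbar)}I_n\in\Scal_n(\CX)$ analytic in $\tbar$.

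For the implication ($\Rightarrow$), I would start from Proposition~\ref{prop3}, which supplies a fundamental solution $Y(x,\tbar)$, analytic on $\calD_0\times\calP$, with monodromy exactly $G_i\,c_i(\tbar)$ along $\gamma_i$, where $c_i(\tbar)=\lambda_i(\tbar)I_n$. Here I would invoke that $\calP$ is simply connected: the nonvanishing analytic function $\lambda_i:\calP\to\CX\setminus\{0\}$ then has an analytic logarithm, so $b_i(\tbar)=\frac{1}{2\pi\sqrt{-1}}\log\lambda_i(\tbar)$ is analytic and $B_i(\tbar)=A_i(\tbar)-b_i(\tbar)I_n$ is analytic, giving (1). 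Forming $f$ as above and setting $Z = f^{-1}Y$ (analytic, since $f$ is analytic and nonvanishing on $\calD_0\times\calP$) yields a solution of (\ref{eqn7}) with $Z^{\gamma_i}=\lambda_i(\tbar)^{-1}f^{-1}\,Y G_i\lambda_i(\tbar)=Z\,G_i$, a constant matrix; hence (\ref{eqn7}) admits an analytic fundamental solution with $\tbar$-independent monodromy and is therefore isomonodromic by Theorem~\ref{bol}, giving (2).

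The computations are routine, and the main obstacle is not any single estimate but getting the analyticity and the conventions to line up: extracting an analytic branch of $b_i$ from $c_i$ is exactly where simple connectedness of $\calP$ is essential, and one must verify that continuation of $f$ along $\gamma_i$ contributes precisely the scalar $e^{2\pi\sqrt{-1}\,b_i(\tbar)}$ (this uses that $\gamma_i$ is a simple loop around $x_i(\tbar)$ alone and that $x_i(\tbar)$ stays in its disk $D_i$). Once these are in place, the scalar gauge transformation by $f$ and $f^{-1}$ cancels the factors $c_i(\tbar)$ against the constant monodromy furnished by Bolibrukh's theorem in both directions.
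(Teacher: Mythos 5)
Your proposal is correct and follows essentially the same route as the paper: the scalar gauge factor $\prod_{i}(x-x_i(\tbar))^{\pm b_i(\tbar)}I_n$, the use of Proposition~\ref{prop3} to get a solution with monodromy exactly $G_i\,c_i(\tbar)$, and simple connectedness of $\calP$ to extract analytic branches $b_i$ with $e^{2\pi\sqrt{-1}\,b_i}=c_i$. The only cosmetic difference is that you invoke Theorem~\ref{bol} explicitly where the paper uses its second characterization implicitly.
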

 \begin{proof} Assume that Equation (\ref{eqn6}) is projectively isomonodromic and let $Y(x,\tbar), G_i$ and $c_i$ be as in the conclusion of Proposition~\ref{prop3}.  Since $\calP$ is simply connected and the $c_i(\tbar)$ are nonzero for all $t$, there exist analytic $b_i:\calP \rightarrow \Scal_n(\CX)$ such that $e^{2\pi\sqrt{-1}b_i} = c_i$. Let 
 \[Z(x,\tbar) = Y(x,\tbar) \prod_{i=1}^m(x-x_i(\tbar))^{-b_i(\tbar)}\]
 One sees that the monodromy of $Z$ along $\gamma_i$ is given by $G_i$ and so is independent of $\tbar$.  Therefore, letting $B_i =  A_i- b_i$, we have that 
 
 \item \begin{eqnarray*}
 \frac{dY}{dx} & = & \left(\sum_{i=1}^m \frac{B_i(\tbar)}{x-x_i(t)}\right)Y
 \end{eqnarray*}
 is isomonodromic.

 Now assume that $A_i,B_i,b_i$ are as in items (1)~and (2)~of the proposition and that Equation (\ref{eqn7}) is isomonodromic.  If $Y(x,\tbar)$ is a local solution of (\ref{eqn7}) with constant monodromy matrices $G_i$ along $\gamma_i$, then 
$Z(x,\tbar) = Y(x,\tbar) \prod_{i=1}^m (x-x_i(\tbar))^{b_i(\tbar)}$  
 will have monodromy $G_i c_i(t)$  along $\gamma_i$, with
 $c_i(\tbar) = e^{2\pi\sqrt{-1}b_i(\tbar)}\ $.  Thus  Equation~(\ref{eqn6}) is projectively isomonodromic.
 \end{proof}
 Proposition \ref{descr} applies  to the DH-$\mathrm V$ example since we can rewrite Equation (\ref{eqn1}) of the Lax pair as
 \begin{eqnarray*} \frac{\partial Y}{\partial x} = \left(\sum_{i=1}^3 \frac{A_i(t)}{(x-x_i)}\right) Y
\end{eqnarray*}
where  $
 A_i =  B_i + b_i$, with 
 \begin{eqnarray*}
B_i & = & \lambda_iS\\
b_i&=& {\frac{\mu I_n}{\prod_{j\neq i}(x_i-x_j)}}.
\end{eqnarray*}
An easy computation shows that since  $x'_i-x'_j=Q(x_i)-Q(x_j)=x_i^2-x_j^2$ for all $i,j$, we have
\[ b'_i=\frac{db_i}{dt}= -{\frac{x_i+\sigma }{\prod_{j\neq i}(x_i-x_j)}}\mu==-\alpha_i \mu.\]
and we recover the result of Proposition~\ref{prop0},  that the monodromy of this equation is  evolving `projectively' and  equal to  \[ M_i(t)=e^{2\pi\sqrt{-1}b_i(\tbar)}G_i= e^{-2\pi\mu\sqrt{-1}\int_{t_0}^t \alpha_i(t)dt}G_i.\]

 \section{Parameterized differential Galois groups}\label{galois} 
 In this section we examine the parameterized differential Galois groups of projectively isomonodromic equations.   Parameterized differential Galois groups Ê({\it cf.} \cite{CaSi}, \cite{Landesman}) generalize the concept of differential Galois groups of the classical Picard-Vessiot theory and we begin this section by briefly describing the underlying theory. 
 
 Let 
 \begin{eqnarray}\label{pveqn}\frac{d Y}{d x} &=& A(x) Y\end{eqnarray}
 be a differential equation where $A(x)$ is an $n\times n$ matrix with entries in $\CX(x)$.  The usual existence theorems for differential equations imply that if $x = x_0$ is a point in $\CX$ such that the entries of $A(x)$ are analytic at $x_0$, then there exists a nonsingular matrix $Z = (z_{i,j})$ of functions analytic in a neighborhood of $x_0$ such that $\frac{d Z}{d x} = A(x)Z$. Note that the field $K = \CX(z_{1,1}, \ldots , z_{n,n})$ is closed with respect to taking the derivation $\frac{d}{d x}$ and  this is an example of a {\it Picard-Vessiot extension}\footnote{Picard-Vessiot extensions and the related Picard-Vessiot theory is developed in a fuller generality in \cite{DAAG} and \cite{PuSi2003} but we shall restrict ourselves to the above context to be concrete.}.  The set of field-theoretic isomorphisms of $K$ that leave $\CX(x)$ elementwise fixed and commute with  $\frac{d}{d x}$ forms a group $G$ called the {\it Picard-Vessiot group} or {\it differential Galois group} of  (\ref{pveqn}). One can show that for any $\sigma \in G$, there exists a matrix $M_\sigma \in \GL_n(\CX)$ such that $\sigma(Z) =(\sigma(z_{i,j})) = ZM_\sigma$. The map $\sigma \mapsto M_\sigma$ is an isomorphism whose image is furthermore a {\it linear algebraic group}, that is, a group of invertible matrices whose entries satisfy some fixed set of polynomial equations in $n^2$ variables. There is a well developed Galois theory for these groups that describes a correspondence between certain subgroups of $G$ and  certain subfields of $K$ as well as associates properties of the equation  (\ref{pveqn}) with properties of the group $G$.  The elements of the monodromy group of (\ref{pveqn}) may be identified with elements of this group and,  when (\ref{pveqn}) has only regular singular points, it is known that $G$ is the smallest linear algebraic group containing these elements ({\it cf.} \cite{PuSi2003}, Theorem 5.8). Further facts about this Galois theory can be found in \cite{DAAG} and \cite{PuSi2003}. 
 
 Now let 
 \begin{eqnarray}\label{ppveqn}
 \frac{d Y}{d x} & = & A(x,\tbar)Y\end{eqnarray}
 be a parameterized system of linear differential equations where $A(x,\tbar)$ is an $n\times n$ matrix whose entries are rational functions of $x$ with coefficients that are functions of $\tbar = (t_1, \ldots, t_r)$, analytic in some domain in $\CX^r$. A differential Galois theory for such equations was developed in \cite{CaSi} and in greater generality in \cite{Landesman}. Let $k_0$ be a suitably large field\footnote{To be precise, we need $k_0$ to be  {\it differentially closed} with respect to $\Pi$, that is, any system of polynomial differential equations in arbitrary unknowns having a solution in an extension field already has a solution in $k_0$. See \cite{CaSi} for a discussion of differentially closed fields in the context of this Galois theory.} containing $\CX(t_1, \ldots, t_r)$ and the functions of $\tbar$ appearing as coefficients in the entries of $A$ and such that $k_0$ is closed under the derivations $\Pi = \{\d_1, \ldots, \d_r\}$ where each $\d_i$ restricts to $\frac{\d}{\d t_i}$ on $\CX(t_1, \ldots, t_r)$ and  the intersection of the kernels of the $\d_i$ is $\CX$. As before, existence theorems for solutions of differential equations guarantee the existence of a nonsingular matrix $Z(x,\tbar) = (z_{i,j}(x,\tbar))$ of functions, analytic in some suitable domain in $\CX \times \CX^r$, such that $\frac{d Z}{d x} = AZ$.  
 We will let $k = k_0(x)$ be the differential field with derivations $\Delta = \{\d_x, \d_1, \ldots , \d_r\}$ where $\d_x(x) = 1, \ \d_x(z) = 0 \mbox{ for all $z \in k_0$ and the } \d_i \mbox{ extend the previous } \d_i \mbox{ with } \d_i(x) = 0.$ Finally we will denote by $K$ the smallest field containing $k$ and the $z_{i,j}$ that is closed under the derivations of $\Delta$. This field is called the {\it parameterized Picard-Vessiot field} or  PPV-field  of  (\ref{ppveqn}).  The set of field-theoretic automorphisms of $K$ that leave $k$ elementwise  fixed and commute with the elements of  $\Delta$ forms a group $G$ called the {\it  parameterized Picard-Vessiot group } (PPV-group) or {\it parameterized differential Galois group} of  (\ref{ppveqn}). One can show that for any $\sigma \in G$, there exists a matrix $M_\sigma \in \GL_n(k_0)$ such that $\sigma(Z) =(\sigma(z_{i,j})) = ZM_\sigma$. Note that $\d_x$ applied to an entry of such an $ M_\sigma$ is  $0$ since these entries are elements of $k_0$ but that such an entry need not be constant with respect to the elements of $\Pi$.  One may think of these entries as functions of $\tbar$. In \cite{CaSi}, the authors show that the map $\sigma \mapsto M_\sigma$ is an isomorphism whose image is furthermore a {\it linear differential algebraic group}, that is, a group of invertible matrices whose entries satisfy some fixed set of polynomial {\it differential}  equations (with respect to the derivations $\Pi = \{\d_1, \ldots, \d_r\}$) in $n^2$ variables. We say that a set $X \subset \GL_n(k_0)$ is {\it Kolchin-closed} if it is the zero set of such a set of polynomial differential equations.  One can show that the Kolchin-closed sets form the closed sets of a topology, called the {\it Kolchin topology} on $\GL_n(k_0)$  ({\it cf.} \cite{cassidy1, cassidy6, CaSi, kolchin_groups}). 

The following result shows how the PPV-group can be used to characterize isomonodromy. As in Section 4, let $\calP$ be a simply connected subset of $\CX^r$ and $\calD$ an open subset of $\pp$ with $x_0 \in \calD$.  We assume that $A(x,\tbar)$ in Equation (\ref{ppveqn})  is analytic in $\calD\times \calP$.  Assume that $\pp\backslash \calD$ is the union of $m$ disjoint disks $D_i$ and that for each $\tbar \in \calP$, Equation (\ref{ppveqn})  has a unique singular point in each $D_i$ and that this singular point is a {\it parameterized regular singular point} in the sense of (\cite{RegSing}, Definition 2.3).  Note that by  (\cite{RegSing}, Corollary 2.6) this in particular implies that the singularity is regular singular for each fixed $t$,  in the usual sense.  Let $\gamma_i, i = 1, \ldots , m$ be the obvious  loops  generating $\pi_1(\calD, x_0)$. We then have that Equation (\ref{ppveqn}) is analytic in $\calD \times \calP$ and we can speak of (parameterized) monodromy matrices $M_i(\tbar)$ corresponding to analytic continuation,  for each fixed  $t$,  of a fundamental solution matrix along $\gamma_i$. 

 \begin{prop}({{\it cf.} \cite{CaSi}, Proposition 5.4}) Assume that $\calD$, $\calP$ and Equation (\ref{ppveqn}) are as above. Then this equation is  isomonodromic in $\calD\times \calP'$ for some subset $\calP' \subset \calP$ if and only if the PPV-group $G$ of this equation over $k$ is conjugate to $G_1(\CX)$ for some linear algebraic group $G_1$ defined over $\CX$. 
 \end{prop}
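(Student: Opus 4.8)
The plan is to reduce both implications to statements about the monodromy generators $G_i(\tbar)$, using the density theorem of \cite{CaSi}: since every singular point is a regular singular point, the PPV-group $G$ is the Kolchin closure of $\{G_1(\tbar),\ldots,G_m(\tbar)\}$ in $\GL_n(k_0)$. The equivalence then amounts to comparing an \emph{analytic} simultaneous conjugation of the $G_i(\tbar)$ to constants with an \emph{algebraic} conjugation of $G$ into $\GL_n(\CX)$, and the density theorem lets each direction be checked on the generators alone.

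For the direction assuming conjugacy, suppose $G = D^{-1}H(\CX)D$ for a linear algebraic group $H$ defined over $\CX$ and some $D\in\GL_n(k_0)$. Each monodromy matrix $G_i(\tbar)$ lies in $G$, so $D\,G_i(\tbar)\,D^{-1}\in H(\CX)\subseteq\GL_n(\CX)$ is a constant matrix. After deleting from $\calP$ the proper closed subset where the entries of $D$ fail to be analytic or $\det D$ vanishes, we obtain $\calP'\subseteq\calP$ on which $D=D(\tbar)$ is an analytic, invertible, $x$-independent gauge factor. Setting $\tilde Z = Z D^{-1}$ gives another fundamental solution of \eqref{ppveqn} whose continuation along $\gamma_i$ is $\tilde Z\cdot D\,G_i(\tbar)\,D^{-1}$; since the latter factor is constant, $\tilde Z$ has $\tbar$-independent monodromy and \eqref{ppveqn} is isomonodromic on $\calD\times\calP'$.

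For the converse, assume \eqref{ppveqn} is isomonodromic on $\calD\times\calP'$, so that the $G_i(\tbar)$ are simultaneously conjugate, by some $C(\tbar)$, to constant matrices $G_i\in\GL_n(\CX)$; by Proposition~\ref{prop2} we may take $C(\tbar)$ analytic. Then $\tilde Z = Z C(\tbar)^{-1}$ has constant monodromy $G_i$ along $\gamma_i$, so by the density theorem its PPV-group is the Kolchin closure of $\langle G_1,\ldots,G_m\rangle\subseteq\GL_n(\CX)$. The crucial point is a lemma on constants: for a subset $S\subseteq\GL_n(\CX)$, the Kolchin closure of $S$ equals $H(\CX)$, where $H$ is the ordinary Zariski closure of $S$ over $\CX$. (One checks that a differential polynomial vanishing on $S$ reduces, on constant matrices, to an ordinary polynomial vanishing on $S$ and hence on $H$, using that $\CX$ is exactly the field of $\Pi$-constants of $k_0$.) Thus the PPV-group attached to $\tilde Z$ is the constant group $H(\CX)$, and since passing from $Z$ to $\tilde Z = Z C(\tbar)^{-1}$ conjugates the PPV-group by the connecting matrix $C(\tbar)$, the original group $G$ is conjugate to $H(\CX)$.

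The main obstacle is precisely the dictionary between the analytic and the differential-algebraic pictures. In the converse one must know that the connecting matrix $C(\tbar)$ of Proposition~\ref{prop2} can be realized with entries in $k_0$, so that the change of fundamental solution it induces is an honest $\GL_n(k_0)$-conjugation of the PPV-group rather than a base extension; in the forward direction one must read the abstract $D\in\GL_n(k_0)$ as a genuine analytic gauge transformation away from a proper closed subset. This is exactly what forces the weakening from $\calP$ to a subdomain $\calP'$, and establishing the constants-lemma together with this analytic/algebraic correspondence is the technical heart of the argument; with these in hand, the density theorem supplies the remainder.
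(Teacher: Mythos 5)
The paper does not actually prove this proposition: it is quoted from \cite{CaSi} (Proposition 5.4), where the implication from ``PPV-group conjugate to $G(\CX)$'' to ``isomonodromic'' is obtained by passing through \emph{complete integrability} --- conjugacy of the PPV-group into $\GL_n(\CX)$ is shown there to be equivalent to the existence of matrices $B_1,\dots,B_r$ with entries in $k$ making the extended system $\d_x Y=AY$, $\d_iY=B_iY$ Frobenius-integrable, and an integrable Pfaffian system has locally constant monodromy. Your converse direction (isomonodromy implies constant group) is essentially the argument of \cite{CaSi}: Proposition~\ref{prop2} gives a fundamental solution with constant monodromy, the density theorem identifies the PPV-group with the Kolchin closure of the constant generators, and your constants lemma is correct, since $\GL_n(k_0)^{\Pi}=\GL_n(\CX)$ is Kolchin closed and differential polynomials restrict to ordinary polynomials on it. The one point you flag there --- that some matrix conjugating the tuple $(G_i(\tbar))_i$ to $(G_i)_i$ lies in $\GL_n(k_0)$ --- does hold: the transporter variety is defined over the subfield of $k_0$ generated by the entries of the $G_i(\tbar)$ and is nonempty, hence has a point over the algebraic closure of that subfield inside the differentially closed field $k_0$.

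The genuine gap is in the forward direction. The conjugating matrix $D$ lies in $\GL_n(k_0)$, and $k_0$ is an \emph{abstract} differentially closed field: its elements are not functions on $\calP$, so ``the proper closed subset where the entries of $D$ fail to be analytic'' has no meaning, and $\tilde Z=ZD^{-1}$ is not an analytic family of fundamental solutions to which the definition of isomonodromy can be applied. You correctly identify this analytic/algebraic dictionary as the technical heart, but you then leave it unproved, and it is exactly where the content of this implication lies. Two repairs are available: (i) the variety $\{X:\ XG_i(\tbar)X^{-1}=H_i,\ i=1,\dots,m\}$ is defined over the field $F$ of meromorphic functions on $\calP$ generated by the entries of the $G_i(\tbar)$ and is nonempty (it has the $k_0$-point $D$), hence has a point with entries algebraic over $F$; such entries are honest analytic functions on a subdomain $\calP'$, which yields the desired analytic gauge transformation and hence isomonodromy on $\calD\times\calP'$; or (ii) follow \cite{CaSi} and avoid evaluating $D$ altogether by routing the argument through the complete-integrability criterion. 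Without one of these, the forward implication is not established.
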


 \section {An algebraic condition for projective isomonodromy}
 We now relate the property of projective isomonodromy to properties of the PPV-group. We still assume Equation \eqref{ppveqn} has (parameterized) regular singularities only, in the sense of  (\cite{RegSing}, Definition 2.3).

 \begin{prop} \label{prop3.1}  Let $k,K, A,$ and $G$ be as above.  Equation (\ref{ppveqn}) is projectively isomonodromic if and only if its PPV-group $G$ is conjugate to a subgroup of $$ \GL_n(\CX) \cdot \Scal_n(k_0)\subset \GL_n(k_0).$$
\end{prop}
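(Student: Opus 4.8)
The plan is to reduce the statement to the Cassidy--Singer density theorem recalled above, namely that for a system with regular singular points the parameterized monodromy matrices are Kolchin dense in the PPV-group $G$. Writing $H := \GL_n(\CX)\cdot\Scal_n(k_0)$, observe that since $\Scal_n(k_0)$ is central in $\GL_n(k_0)$ the set $H$ is a subgroup, equal to the set of matrices $\lambda M$ with $\lambda\in k_0^{\times}$ and $M\in\GL_n(\CX)$. The entire argument then rests on translating ``projectively isomonodromic'' into the single assertion that, with respect to a suitable fundamental solution, every monodromy matrix lies in $H$, together with the fact that $H$ is itself Kolchin closed.

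Accordingly, the first and central step I would carry out is a Key Lemma: a matrix $B\in\GL_n(k_0)$ lies in $H$ if and only if $(\partial_j B)\,B^{-1}\in\Scal_n(k_0)$ for every $j=1,\dots,r$, and consequently $H$ is Kolchin closed. One implication is immediate: if $B=\lambda M$ with $M$ constant, then $(\partial_j B)B^{-1}=(\partial_j\lambda/\lambda)\,I_n$. For the converse, suppose $(\partial_j B)B^{-1}=\mu_j I_n$ with $\mu_j\in k_0$. Setting $L_j=(\partial_j B)B^{-1}$ and using $\partial_i B^{-1}=-B^{-1}(\partial_i B)B^{-1}$, one obtains the flatness identity $\partial_i L_j-\partial_j L_i=[L_i,L_j]$; since the $L_j$ are scalar this forces $\partial_i\mu_j=\partial_j\mu_i$. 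Because $k_0$ is differentially closed and this integrability condition holds, the completely integrable system $\partial_j\lambda=\mu_j\lambda$ has a nonzero solution $\lambda\in k_0$, and then $M:=\lambda^{-1}B$ satisfies $\partial_j M=0$ for all $j$. As $\bigcap_j\ker\partial_j=\CX$, we conclude $M\in\GL_n(\CX)$ and $B=\lambda M\in H$. Finally, the conditions ``$(\partial_j B)B^{-1}$ is scalar'' are equivalent to the vanishing of all $2\times2$ minors built from corresponding entries of $\partial_j B$ and of $B$, which are polynomial differential equations; hence $H$ is Kolchin closed, and so is any conjugate $PHP^{-1}$ with $P\in\GL_n(k_0)$.

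For the forward direction, assume \eqref{ppveqn} is projectively isomonodromic. By Proposition~\ref{prop3} there is a fundamental solution $Y(x,\tbar)$, analytic on $\calD_0\times\calP$, whose monodromy along $\gamma_i$ is exactly $G_i\,c_i(\tbar)$ with $G_i\in\GL_n(\CX)$ fixed and $c_i$ scalar-valued; each such matrix lies in $H$ by construction. Computing the PPV-group with respect to this $Y$, the parameterized monodromy is Kolchin dense in $G$, so $G$ is contained in the Kolchin-closed subgroup $H$. Since changing the fundamental solution replaces $G$ by a $\GL_n(k_0)$-conjugate, this shows $G$ is conjugate to a subgroup of $H$. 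Conversely, suppose $P^{-1}GP\subseteq H$ for some $P\in\GL_n(k_0)$. Letting $Z$ be an analytic fundamental solution defining $G$ and putting $\tilde Z=ZP$, the matrix $P$ is $\partial_x$-constant, so $\tilde Z$ is again a fundamental solution with monodromy $\tilde M_i=P^{-1}M_iP\in H$. By the Key Lemma each $\tilde M_i=\lambda_i N_i$ with $N_i\in\GL_n(\CX)$ and $\lambda_i\in k_0^{\times}$; viewing $\tilde M_i(\tbar)$ as an analytic function of $\tbar$ with $N_i$ constant shows $\lambda_i(\tbar)$ is analytic, so with $G_i:=N_i$ and $c_i:=\lambda_i$ the solution $\tilde Z(\cdot,\tbar)$ has monodromy $G_i\,c_i(\tbar)$, which is exactly projective isomonodromy.

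The step I expect to be the main obstacle is the converse half of the Key Lemma: producing the scalar $\lambda$ that strips off the projective factor. This is precisely where differential closedness of $k_0$ is indispensable, and where one must verify $\partial_i\mu_j=\partial_j\mu_i$ through the flatness identity. A secondary technical point in the converse direction is the analyticity of $\tilde Z=ZP$ on all of $\calP$: since entries of $P\in\GL_n(k_0)$ may fail to be analytic on a proper analytic subset, one either restricts to its complement or invokes the analytic-conjugation result Proposition~\ref{prop2} to arrange $P$ analytic, exactly as in the passage to Proposition~\ref{prop3}.
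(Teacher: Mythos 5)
Your proposal is correct, and its skeleton is the same as the paper's: reduce everything to the Cassidy--Singer density of the parameterized monodromy in $G$ together with the Kolchin closedness of $H=\GL_n(\CX)\cdot\Scal_n(k_0)$. Where you differ is in how that closedness is obtained. The paper gets it in one line by observing that $H$ is the image of the linear differential algebraic group $\GL_n(\CX)\times\Scal_n(k_0)$ under multiplication and citing Proposition 7 of Cassidy, which guarantees that homomorphic images of linear differential algebraic groups are Kolchin closed. You instead prove a Key Lemma characterizing $H$ as $\{B\in\GL_n(k_0):(\d_jB)B^{-1}\in\Scal_n(k_0)\ \forall j\}$, verify the integrability condition $\d_i\mu_j=\d_j\mu_i$ via the flatness identity, and use differential closedness of $k_0$ to produce the scalar $\lambda$ with $\d_j\lambda=\mu_j\lambda$; this is essentially the same computation the paper itself performs later in Lemma~\ref{lem1} (there with Schur's Lemma supplying the scalarity), so your route is self-contained and avoids the citation, at the cost of a page of computation. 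It also buys you something the paper leaves implicit: the paper dismisses the reverse implication with ``the converse is clear,'' whereas your Key Lemma hands you the explicit factorization $\tilde M_i=\lambda_iN_i$ needed to exhibit the matrices $G_i$ and scalars $c_i(\tbar)$ witnessing projective isomonodromy, and you correctly flag the one genuine subtlety there (passing from a conjugation over the abstract field $k_0$ back to an analytic family, for which Proposition~\ref{prop2} is the right tool). Both arguments are sound; yours is more elementary and more explicit, the paper's is shorter.
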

\begin{proof} 
Assume Equation (\ref{ppveqn}) is projectively isomonodromic and  let $x_i(t), \ i = 1, \ldots , m$ be the singular points of Equation \eqref{ppveqn}
\begin{eqnarray*}\label{ppveqn2}
 \frac{d Y}{d x} & = & A(x,t)Y\end{eqnarray*}
and let $Y(x,t)$,  $G_i$,  $c_i$ be as in the conclusion of Proposition 6. Since $\calP$ is simply connected and the $c_i(t)$ are nonzero, there exist analytic $ b_i:\calP \rightarrow  \Scal_n\CX$ such that $ e^{2\pi\sqrt{-1}b_i} = c_i$ for each $i$ (in fact, we can select $b_i(t) \in  \Scal(k_0)$ since  $k_0$ is differentially closed). Consider the system of differential equations 
\begin{eqnarray}\label{ppveqn2}
 \frac{d Z}{d x} & = & (A(x,t)-\sum_{i=1}^m \frac{b_i(t)}{(x-x_i(t))})Z = B(x,t) Z\nonumber\\ &&\\
 \frac{du}{d x} & = & (\sum_{i=1}^m \frac{b_i(t)}{(x-x_i(t))})u=b(t) u\nonumber\end{eqnarray}

 We will consider this as a differential equation with coefficient matrix
 \[\left(\begin{array}{cc} B & 0\\0&b\end{array}\right) \in  \gl_{2n}(k).\]
 Let $E$ be the PPV-extension of $k$ for the system (\ref{ppveqn2}).  Note that the first equation of this system is isomonodromic, hence  has a PPV-group that is conjugate in $\GL_n(k_0)$ to a constant group over $\CX$. Therefore the PPV-group  of (\ref{ppveqn2}) over $k$ is conjugate in  $\GL_{ 2n}(k_0)$ to a subgroup of $\GL_n(\CX) \times \Scal_n(k_0)\subset  \GL_{2n}(k_0)$. 
 It is easy to see that $Y\in \GL_n(E)$ is a fundamental solution of Equation \eqref{ppveqn} if and only if~$Y=Zu$ where 
\[ \left(\begin{array}{cc} Z & 0\\0&u\end{array}\right)\]
 is a fundamental solution of \eqref{ppveqn2}. Let us choose $Z$ such  that the corresponding representation $H$ of the PPV-group is a subgroup of $\GL_n(\CX) \times \Scal_n(k_0)$ and let $Y=Zu$ be a  fundamental solution of \eqref{ppveqn}. We clearly have $K \subset E$, where $K$ is the PPV-extension of $k$ for \eqref{ppveqn}.  The action of $H$ on $E$ gives 
  \[ (\sigma, \tau)(Y) = \sigma(Z)\tau(u)\] 
 for all $(\sigma, \tau)\in H$, and it induces a homomorphism $\Phi$ of $H$ onto the PPV-group of $K$ over $k$.  In terms of matrices, $\Phi$ is given by $\Phi(\sigma,\tau)  =  \sigma\cdot \tau $ and its image is in $\GL_n(\CX)\cdot \Scal_n(k_o)$.

The converse is clear since  the (parameterized) monodromy matrices $M_i(t)$ all belong to the PPV-group (see Theorem 3.5 of \cite{RegSing}). 
\end{proof}
One easy consequence of Propostion~\ref{prop3.1} is 
\begin{cor}\label{cor3.1} Let $k,K, A,$ and $G$ be as above. If Equation (\ref{ppveqn}) is projectively isomonodromic then  the commutator subgroup $(G,G)$ is conjugate to a subgroup of $\GL_n(\CX)$. \end{cor}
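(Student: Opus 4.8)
The plan is to reduce immediately to Proposition~\ref{prop3.1} and then exploit that scalar matrices are central in $\GL_n$. Since (\ref{ppveqn}) is projectively isomonodromic, Proposition~\ref{prop3.1} gives that $G$ is conjugate to a subgroup of $\GL_n(\CX)\cdot\Scal_n(k_0)$. After replacing $G$ by this conjugate, I may assume $G\subseteq \GL_n(\CX)\cdot\Scal_n(k_0)$, and it then suffices to show that $(G,G)\subseteq \GL_n(\CX)$, since tracing the conjugation back yields the statement.

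The key observation concerns a single commutator. Write two elements of $\GL_n(\CX)\cdot\Scal_n(k_0)$ as $g_1=A_1 s_1$ and $g_2=A_2 s_2$, with $A_i\in\GL_n(\CX)$ and $s_i\in\Scal_n(k_0)$. Because the $s_i$ are scalar matrices they lie in the center of $\GL_n(k_0)$ and hence commute with every factor; moving them to the right and cancelling gives
\[
g_1 g_2 g_1^{-1} g_2^{-1}=\bigl(A_1 A_2 A_1^{-1} A_2^{-1}\bigr)\bigl(s_1 s_2 s_1^{-1} s_2^{-1}\bigr)=A_1 A_2 A_1^{-1} A_2^{-1},
\]
the scalar commutator being the identity. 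As $A_1 A_2 A_1^{-1} A_2^{-1}\in\GL_n(\CX)$, every commutator of elements of $\GL_n(\CX)\cdot\Scal_n(k_0)$, and in particular every commutator of elements of $G$, lies in $\GL_n(\CX)$.

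It then remains to pass from the abstract commutator subgroup to its Kolchin closure $(G,G)$. Here I would invoke that $\GL_n(\CX)$ is itself Kolchin closed in $\GL_n(k_0)$: a matrix has entries in $\CX$ precisely when the derivations $\d_1,\ldots,\d_r$ annihilate each entry, and these are (linear) polynomial differential conditions. Since the group generated by the commutators of $G$ is contained in the Kolchin closed set $\GL_n(\CX)$, so is its Kolchin closure $(G,G)$. Undoing the initial conjugation shows that $(G,G)$ is conjugate to a subgroup of $\GL_n(\CX)$.

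The commutator computation is elementary, exactly as the label \emph{easy consequence} suggests; the only point requiring a little care is the final step, namely ensuring that the derived group in the differential-algebraic sense (the Kolchin closure of the group generated by commutators) does not escape $\GL_n(\CX)$. This is where the Kolchin-closedness of $\GL_n(\CX)$ enters, so I expect that observation, rather than the algebra of commutators, to be the substantive point to record.
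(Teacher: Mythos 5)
Your proof is correct and is exactly the argument the paper intends: the paper states the corollary as an ``easy consequence'' of Proposition~\ref{prop3.1} without further proof, and the centrality of $\Scal_n(k_0)$ making every commutator of $\GL_n(\CX)\cdot\Scal_n(k_0)$ land in $\GL_n(\CX)$ is the whole point. Your closing remark about the Kolchin-closedness of $\GL_n(\CX)$ is harmless but not needed here, since in the paper's notation $(G,G)$ denotes the abstract derived group (the Kolchin closure is written $(G,G)_\Pi$), and a group containing all commutators contains the subgroup they generate.
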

This corollary yields a simple test to show that (\ref{ppveqn}) is not projectively isomonodromic: If the eigenvalues of the commutators of the monodromy matrices (with respect to any fundamental solution matrix) are not constant, then (\ref{ppveqn})  is not projectively isomonodromic. In particular,  if the determinant or trace of any of these matrices is not constant then (\ref{ppveqn})  is not projectively isomonodromic. The converse of the corollary is not true in general (see Remark~\ref{remark3.1} below) but it is true if Equation (\ref{ppveqn}) is absolutely irreducible, that is, when  (\ref{ppveqn}) does not factor over $\overline{k}$, the algebraic closure of $k$. Before we prove this, we will discuss some group theoretic facts.

 In the following, we say that a subgroup $H \subset \GL_n(k_0)$ is {\it irreducible} if the only $H$-invariant subspaces of $k_0^n$ are $\{0\}$ and $k_0^n$. A differential equation with coefficients in $k$ is {\it absolutely irreducible} if it is irreducible over any finite extension of $k$.
 \begin{lem}\label{lem1} Let $H$ be an irreducible subgroup of $\GL_n(\CX)$ and let $g\in \GL_n(k_0)$ normalize $H$.  Then $g \in \GL_n(\CX)\cdot \Scal_n(k_0)$.\end{lem}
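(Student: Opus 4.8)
The plan is to exploit the fact that conjugation by $g$ preserves $H$, whose elements have \emph{constant} (i.e.\ $\CX$-valued) entries, and then to differentiate this relation. First I would fix a derivation $\d \in \Pi$ and, for each $h \in H$, write $ghg^{-1} = h'$ with $h' \in H \subset \GL_n(\CX)$. Since $h$ and $h'$ have entries in $\CX$, we have $\d h = \d h' = 0$, so differentiating the identity $gh = h'g$ gives $(\d g)h = h'(\d g) = (ghg^{-1})(\d g)$. Setting $M = g^{-1}\,\d g \in \gl_n(k_0)$ and left-multiplying by $g^{-1}$ yields $Mh = hM$ for every $h \in H$; that is, $M$ lies in the commutant of $H$.

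The next step is to apply Schur's lemma. By hypothesis $H$ acts irreducibly on $k_0^n$, and $k_0$, being differentially closed, is algebraically closed. Hence the commutant of $H$ in $\gl_n(k_0)$ consists only of scalar matrices, so $M = s_\d\, I$ for some $s_\d \in k_0$. Equivalently $\d g = s_\d\, g$, which means that \emph{every} entry $g_{ij}$ of $g$ satisfies the same scalar equation $\d g_{ij} = s_\d\, g_{ij}$, for each $\d \in \Pi$.

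Finally I would factor out a common scalar. Choosing any nonzero entry $g_{kl}$ and setting $\lambda = g_{kl}$, the quotient rule combined with $\d g_{ij} = s_\d g_{ij}$ and $\d \lambda = s_\d \lambda$ gives $\d(g_{ij}/\lambda) = 0$ for every $\d \in \Pi$. Since the intersection of the kernels of the derivations in $\Pi$ is $\CX$, each ratio $c_{ij} = g_{ij}/\lambda$ lies in $\CX$. Thus $g = \lambda\, C$ with $C = (c_{ij}) \in M_n(\CX)$; as $g$ is invertible and $\lambda \neq 0$, the matrix $C = \lambda^{-1}g$ is invertible, so $C \in \GL_n(\CX)$. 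Writing $g = C \cdot (\lambda I)$ then exhibits $g \in \GL_n(\CX)\cdot \Scal_n(k_0)$, as required.

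I expect the only genuinely non-routine point to be the invocation of Schur's lemma over $k_0$: one must make sure that irreducibility of $H$ is used in exactly the sense given just before the lemma (no proper nonzero $H$-invariant subspace of $k_0^n$), and that $k_0$ is algebraically closed so that the commutant is precisely the scalar matrices. Both hold here, the latter because a differentially closed field is algebraically closed; once this is secured, the passage through the logarithmic derivative $M = g^{-1}\d g$ and the final factorization are straightforward linear algebra.
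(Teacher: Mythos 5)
Your proof is correct and follows essentially the same route as the paper: differentiate the conjugation relation to show that a logarithmic derivative of $g$ commutes with every element of $H$, invoke Schur's lemma (valid since $k_0$ is algebraically closed and $H$ acts irreducibly on $k_0^n$) to conclude that this logarithmic derivative is scalar, and then factor $g$ as a scalar function times a constant matrix. The only difference is cosmetic and occurs in the last step: the paper checks the integrability conditions for the system $\d_i u = z_i u$ and solves it in $k_0$, whereas you take a nonzero entry of $g$ itself as the common scalar factor, which neatly sidesteps that verification.
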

\begin{proof} For any $h \in H$ and $g \in \GL_n(k_0)$ normalizing $H$, we have that \[0= \d_i(g^{-1}hg) = -g^{-1}\d_i(g)g^{-1}hg + g^{-1}h\d_i(g)\] for all $\d_i \in \Pi$.  Therefore, 
 \[\d_i(g)g^{-1} h = h \d_i(g)g^{-1}.\]
 Since $H$ is irreducible, Schur's Lemma implies that  $\d_i(g)g^{-1} = z_i\in \Scal_n(k_0)$.  One can check that the $z_i$ satisfy the integrability conditions  $\d_iz_j = \d_j z_i$ so there exists a nonzero $u \in \Scal_n(k_0)$ such that $\d_i u = z_i u$ for all $i$.  This implies that $\d_i(u^{-1}g) = 0$ for all $i$  and so $g = u h$ for some $h \in \GL_n(\CX)$.\end{proof}

It is well known that if $G$ and $H$ are linear algebraic groups with $H$ normal in $ G$, then $G/H$ is also a linear algebraic group.  For  $\Scal_n(k_0) \lhd \GL_n(k_0)$, we will denote by $\rho$ the canonical map $\rho: \GL_n(k_0) \rightarrow \GL_n(k_0)/\Scal_n(k_0)$. 
\begin{lem}\label{lem2} Let $H \subset \GL_n(k_0)$ be a Kolchin-connected linear differential algebraic group and let $\Hbar$ be its Zariski-closure in $\GL_n(k_0)$.  
Assume that  $\Hbar$ is  irreducible.  Then \[H \subset (H,H)_\Pi \cdot \Scal_n(k_0),\] where $(H,H)_\Pi$ is the Kolchin-closure of $(H,H) $.
\end{lem}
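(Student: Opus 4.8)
The plan is to reduce the statement to the assertion that the image of $H$ modulo scalars equals its own Kolchin-derived group, and then to recognize that image as a Kolchin-connected group whose Zariski closure is \emph{semisimple}, to which a structure theorem for differential algebraic groups applies.

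First I would reformulate the inclusion via the projection $\rho:\GL_n(k_0)\to \GL_n(k_0)/\Scal_n(k_0)$. Since $\Scal_n(k_0)=\ker\rho$, the desired inclusion $H\subset (H,H)_\Pi\cdot\Scal_n(k_0)$ holds if and only if $\rho(H)\subset \rho\big((H,H)_\Pi\big)$. By Proposition 7 of \cite{cassidy1} the image of a linear differential algebraic group under a differential homomorphism is again Kolchin closed, so $\rho\big((H,H)_\Pi\big)$ is Kolchin closed; as $\rho$ is a Kolchin-continuous homomorphism carrying the Kolchin-dense subgroup $(H,H)$ of $(H,H)_\Pi$ onto $(\rho(H),\rho(H))$, we obtain $\rho\big((H,H)_\Pi\big)=(\rho(H),\rho(H))_\Pi$. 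Thus it suffices to prove that $\rho(H)$ is \emph{Kolchin-perfect}, i.e. $\rho(H)=(\rho(H),\rho(H))_\Pi$.

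Next I would determine the Zariski closure of $\rho(H)$. Because $\rho$ is a morphism of algebraic groups, $\rho(\Hbar)$ is Zariski closed, equals the Zariski closure of $\rho(H)$, and is Zariski connected since $\Hbar$ is. I claim it is semisimple. Let $N$ be the radical of $\Hbar$. By the Lie--Kolchin theorem $N$ is triangularizable, and the finite set of weights of $N$ on $k_0^n$ is permuted by the connected group $\Hbar$, hence fixed; irreducibility of $\Hbar$ then forces a single weight, so $N$ acts by scalars and $N\subset\Scal_n(k_0)$. Consequently $N$ is a central torus, $\Hbar$ is reductive, and $\rho(\Hbar)=\Hbar/(\Hbar\cap\Scal_n(k_0))$ has trivial radical, i.e. is semisimple. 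Note also that $\rho(H)$ is Kolchin connected, being the image of the Kolchin-connected group $H$.

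It remains to conclude that the Kolchin-connected group $\rho(H)$, whose Zariski closure $\rho(\Hbar)$ is semisimple, coincides with its Kolchin-derived group. For this I would invoke the structure theory of differential algebraic subgroups of semisimple groups (Cassidy; see also \cite{CaSi}): a Kolchin-connected linear differential algebraic group with semisimple Zariski closure is Kolchin-perfect, essentially because up to conjugacy such a group is assembled from groups of the form $\mathcal{G}(k_1)$ with $\mathcal{G}$ almost simple and $k_1$ an intermediate $\Pi$-field, each perfect in characteristic zero. Combined with the first paragraph this yields $H\subset (H,H)_\Pi\cdot\Scal_n(k_0)$. I expect this to be the main obstacle: Zariski density of $(\rho(H),\rho(H))_\Pi$ in $\rho(\Hbar)$ is easy, but upgrading it to Kolchin equality is not a soft topological fact and requires precisely this classification of Zariski-dense Kolchin-closed subgroups of semisimple algebraic groups.
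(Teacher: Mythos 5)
Your proposal is correct and follows essentially the same route as the paper: pass to $\rho(H)$ in $\GL_n(k_0)/\Scal_n(k_0)$, show its Zariski closure is connected semisimple using irreducibility of $\Hbar$, and invoke Cassidy's classification (Propositions 11 and 13 of \cite{cassidy6}) to conclude that $\rho(H)$ equals the Kolchin closure of its commutator subgroup. The only difference is cosmetic: you prove reductivity of $\Hbar$ via the radical and Lie--Kolchin where the paper cites it, and the paper routes the scalar step through $Z(\Hbar)$ and the decomposition $\Hbar = Z(\Hbar)\cdot(\Hbar,\Hbar)$ rather than through the radical.
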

\begin{proof} Since $\Hbar$ is irreducible, it must be reductive (\cite{springer}, p.~37). Since $H$ is Kolchin-connected, $\Hbar$ is Zariski-connected  so we can write $\Hbar = Z(\Hbar)\cdot (\Hbar,\Hbar)$ where $Z(\Hbar)$ is the center of $\Hbar$ (\cite{humphreys}, Ch. 27.5).  Using the irreducibility again, Schur's Lemma implies that $Z(\Hbar) \subset \Scal_n(k_0)$. 
 Using the map $\rho$ above, we have that $\rho(\Hbar)$ is isomorphic to $ (\Hbar,\Hbar)/(Z(\Hbar)\cap (\Hbar,\Hbar))$ and so is a connected semisimple linear algebraic group.  Furthermore, $\rho(H)$ is a Zariski-dense,  Kolchin-connected, subgroup of $\rho(\Hbar)$. Propositions 11 and 13  of \cite{cassidy6} imply that $\rho(H)$ equals $(\rho(H),\rho(H))_\Pi$, the Kolchin-closure of its commutator subgroup $(\rho(H),\rho(H))$. Since $(H,H)_\Pi$ is a linear differential algebraic group, we have that $\rho((H,H)_\Pi)$ is a linear differential algebraic group containing $(\rho(H),\rho(H))$ and therefore contains $(\rho(H), \rho(H))_\Pi$. Since $(H,H)_\Pi \subset H$ we have that $\rho((H,H)_\Pi) = \rho(H)$. Therefore $H \subset (H,H)_\Pi \cdot \Scal_n(k_0)$. \end{proof}
 
 \begin{lem}\label{lem3} Let $G \subset \GL_n(k_0)$ be a linear differential group and assume that \begin{enumerate}
 \item $(G,G) \subset \GL_n(\CX)$ and 
 \item the identity component $\Gbar^0$ of $\Gbar$, the Zariski-closure of $G$ in $\GL_n(k_0)$, is irreducible.
 \end{enumerate}
 Then $G \subset \GL_n(\CX) \cdot \Scal_n(k_0)$.
 \end{lem}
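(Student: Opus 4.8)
The plan is to reduce the statement to Lemma~\ref{lem1} by exhibiting an irreducible subgroup of $\GL_n(\CX)$ that is normalized by all of $G$; the natural candidate is the Kolchin closure of the commutator subgroup of the Kolchin identity component of $G$.

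Write $G^0$ for the Kolchin identity component of $G$; by the structure theory of linear differential algebraic groups ({\it cf.} \cite{cassidy1}) it is a Kolchin connected normal subgroup of finite index in $G$. The first and most delicate step is to identify its Zariski closure: I claim $\overline{G^0}^{\,\mathrm{Zar}}=\Gbar^0$. Since the Kolchin topology is finer than the Zariski topology, $G^0$ is Zariski connected, so $\overline{G^0}^{\,\mathrm{Zar}}$ is a Zariski connected closed subgroup; as $G^0$ has finite index in $G$, a coset decomposition $G=\bigsqcup_j G^0 g_j$ gives $\Gbar=\bigcup_j \overline{G^0}^{\,\mathrm{Zar}}g_j$, so $\overline{G^0}^{\,\mathrm{Zar}}$ has finite index in $\Gbar$, and a connected closed subgroup of finite index containing the identity must be $\Gbar^0$. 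In particular $G^0$ is Zariski dense in $\Gbar^0$, and since any $G^0$-invariant subspace is stabilized by $\overline{G^0}^{\,\mathrm{Zar}}=\Gbar^0$, hypothesis~(2) shows that $G^0$ acts irreducibly on $k_0^n$.

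Next I would apply Lemma~\ref{lem2} to $H:=G^0$, whose Zariski closure $\Gbar^0$ is irreducible, obtaining $G^0\subseteq (G^0,G^0)_\Pi\cdot\Scal_n(k_0)$. Set $N:=(G^0,G^0)_\Pi$. Because $(G^0,G^0)\subseteq (G,G)\subseteq \GL_n(\CX)$ by hypothesis~(1), and $\GL_n(\CX)$ is Kolchin closed (it is cut out by the differential equations $\d_i g_{r,s}=0$), we get $N\subseteq \GL_n(\CX)$. Moreover $N$ acts irreducibly on $k_0^n$: any nonzero $N$-invariant subspace $V$ is automatically invariant under $\Scal_n(k_0)$, since scalars preserve every subspace, hence under $N\cdot\Scal_n(k_0)\supseteq G^0$, forcing $V=k_0^n$ by the irreducibility of $G^0$ just established. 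Thus $N$ is an irreducible subgroup of $\GL_n(\CX)$.

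Finally, since $G^0$ is normal in $G$, every $g\in G$ satisfies $g(G^0,G^0)g^{-1}=(G^0,G^0)$, and as conjugation is a homeomorphism for the Kolchin topology it follows that $gNg^{-1}=N$; that is, every element of $G$ normalizes the irreducible subgroup $N\subseteq\GL_n(\CX)$. Lemma~\ref{lem1} then yields $g\in \GL_n(\CX)\cdot\Scal_n(k_0)$ for each $g\in G$, which is the desired conclusion. The main obstacle is the passage between the two topologies in the first step: one must ensure that the Kolchin identity component $G^0$ (the group to which Lemma~\ref{lem2} applies) is genuinely Zariski dense in $\Gbar^0$ (the group whose irreducibility is assumed), so that the irreducibility hypothesis is available exactly where both Lemma~\ref{lem2} and the normalizer argument require it.
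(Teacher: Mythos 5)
Your proof is correct and follows essentially the same route as the paper's: identify $\overline{G^0}$ with $\Gbar^0$, apply Lemma~\ref{lem2} to $G^0$ to get $G^0\subset (G^0,G^0)_\Pi\cdot\Scal_n(k_0)$, deduce that $(G^0,G^0)_\Pi$ is an irreducible subgroup of $\GL_n(\CX)$ normalized by all of $G$, and invoke Lemma~\ref{lem1}. You merely supply details the paper leaves implicit (Zariski density of $G^0$ in $\Gbar^0$, Kolchin closedness of $\GL_n(\CX)$, and the scalar-absorption argument for irreducibility of $(G^0,G^0)_\Pi$), all of which are sound.
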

\begin{proof} We first note that the Zariski-closure of $G^0$, the Kolchin-component of the identity of $G$ is Zariski-connected and of finite index in $\Gbar$. Therefore $\Gbar^0$ is the Zariski-closure $\overline{G^0}$ of $G^0$.  We now apply Lemma~\ref{lem2} to $H = G^0$ and conclude that $G^0 \subset (G^0,G^0)_\Pi \cdot \Scal_n(k_0)$.  Since $(G,G) \subset \GL_n(\CX)$ we have that $(G^0,G^0)_\Pi \subset \GL_n(\CX)$.  Furthermore, since $\Gbar^0$ is irreducible and is the Zariski-closure of $G^0$, we have that $G^0$ is irreducible.  Therefore $(G^0,G^0)_\Pi$ is an irreducible subgroup of $\GL_n(\CX)$. Any $g \in G$ normalizes $G^0$ and therefore normalizes $(G^0,G^0)_\Pi$. Applying Lemma~\ref{lem1} to $H = (G^0,G^0)_\Pi$, we have that $G \subset \GL_n(\CX) \cdot \Scal_n(k_0)$.\end{proof}

\begin{remark}\label{remark3.1} { Simple examples ({\it e.g.}, $G = \diag_n(k_0)$, the group of diagonal matrices) show that the condition $(G,G) \subset \GL_n(\CX)$ does not imply $G \subset \GL_n(\CX) \cdot \Scal_n(k_0)$ without some additional hypotheses. }
\end{remark}

\begin{prop}\label{prop3.2} Let $k,K,A,G$ be as in Proposition~\ref{prop3.1}. If Equation~(\ref{ppveqn}) is absolutely irreducible and $(G,G)$ is conjugate to a subgroup of  $\GL_n(\CX)$, then (\ref{ppveqn}) is projectively isomonodromic.
\end{prop}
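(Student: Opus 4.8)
The plan is to reduce the statement to Lemma~\ref{lem3} and then invoke Proposition~\ref{prop3.1}. By Proposition~\ref{prop3.1} it suffices to show that $G$ is conjugate to a subgroup of $\GL_n(\CX)\cdot\Scal_n(k_0)$, and Lemma~\ref{lem3} produces exactly this conclusion once its two hypotheses are met: that $(G,G)\subset\GL_n(\CX)$ and that the identity component $\Gbar^0$ of the Zariski closure $\Gbar$ of $G$ is irreducible. After a preliminary conjugation the first hypothesis is just our standing assumption on $(G,G)$, so the real work is to deduce the irreducibility of $\Gbar^0$ from the absolute irreducibility of Equation~(\ref{ppveqn}).

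First I would recall (\emph{cf.}~\cite{CaSi}) that the Zariski closure $\Gbar$ of the PPV-group $G$ in $\GL_n(k_0)$ is the differential Galois group, in the classical Picard--Vessiot sense, of Equation~(\ref{ppveqn}) considered over $k$ with the single derivation $\d_x$, whose field of constants is $k_0$. Because $k_0$ is differentially closed it is in particular algebraically closed, so the classical theory (\emph{cf.}~\cite{PuSi2003}) applies with an algebraically closed field of constants. Within that theory the equation is irreducible over $k$ exactly when $\Gbar$ acts irreducibly on the solution space $k_0^n$, while absolute irreducibility --- the absence of a factorization over $\overline{k}$ --- corresponds to irreducibility of the action of $\Gbar^0$. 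The direction I need is that absolute irreducibility forces $\Gbar^0$ to be irreducible. For this I would use the Galois correspondence: the algebraic closure of $k$ inside the Picard--Vessiot field $K$ is the fixed field $\ell_0 = K^{\Gbar^0}$, a finite Galois extension of $k$ with group $\Gbar/\Gbar^0$, over which the differential Galois group of Equation~(\ref{ppveqn}) is precisely $\Gbar^0$; since the equation stays irreducible over $\ell_0 \subset \overline{k}$, the group $\Gbar^0$ must act irreducibly on $k_0^n$.

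The argument then finishes quickly. By assumption there is $g\in\GL_n(k_0)$ with $g(G,G)g^{-1}=(gGg^{-1},gGg^{-1})\subset\GL_n(\CX)$. Conjugation preserves irreducibility, so the identity component $g\Gbar^0 g^{-1}$ of the Zariski closure of $gGg^{-1}$ remains irreducible. Applying Lemma~\ref{lem3} to $gGg^{-1}$ yields $gGg^{-1}\subset\GL_n(\CX)\cdot\Scal_n(k_0)$, that is, $G$ is conjugate to a subgroup of $\GL_n(\CX)\cdot\Scal_n(k_0)$, and Proposition~\ref{prop3.1} then gives that Equation~(\ref{ppveqn}) is projectively isomonodromic.

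The step I expect to be the main obstacle is the translation carried out in the second paragraph: one must relate the purely algebraic notion of factoring over $\overline{k}$ to the \emph{Zariski} identity component $\Gbar^0$ (not the Kolchin component) of the PPV-group's closure. This hinges on identifying $\Gbar$ with a classical Picard--Vessiot group and on the fact that finite algebraic subextensions of $K/k$ correspond to finite-index closed subgroups of $\Gbar$, each necessarily containing $\Gbar^0$; confirming that the Picard--Vessiot group base-changes to $\Gbar^0$ over the algebraic closure of $k$ in $K$ is the technical heart of the matter.
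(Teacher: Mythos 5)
Your proposal is correct and follows essentially the same route as the paper: identify $\Gbar$ with the classical Picard--Vessiot group over $k$, deduce from absolute irreducibility that $\Gbar^0$ is irreducible, and then apply Lemma~\ref{lem3} (and Proposition~\ref{prop3.1}). The only difference is that you supply the Galois-correspondence justification for the step ``absolute irreducibility $\Rightarrow$ $\Gbar^0$ irreducible,'' which the paper states without proof; that justification is sound.
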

\begin{proof} As noted above, $\Gbar$ is the usual Picard-Vessiot group of (\ref{ppveqn}) over $k$.  If (\ref{ppveqn}) is absolutely irreducible, then $\Gbar^0$ is an irreducible subgroup of $\GL_n(k_0)$. Lemma~\ref{lem3} implies that $G \subset \GL_n(\CX) \cdot \Scal_n(k_0)$
\end{proof}

\bibliographystyle{amsplain}

\end{document}